\documentclass[a4paper,12pt]{amsart}
 \usepackage{amsfonts,mathrsfs}
 \usepackage{amsthm}
 \usepackage{amssymb}
 \usepackage{enumerate}
 \usepackage{tikz-cd}
 \usepackage{MnSymbol}
 \usepackage{mathtools}
 \usepackage{hyperref}
 \usepackage{color}
 \usepackage{subcaption}
 \usepackage{cleveref}
 \usepackage{csquotes}
\usepackage{amsmath}
\usepackage[new]{old-arrows}

 \usepackage{listings}
 \usepackage{tikz-cd}
\usepackage{mathrsfs}

\newcommand\R{\mathbb{R}}

\newcommand\Z{\mathbb{Z}}
\newcommand\N{\mathbb{N}}

\DeclareMathOperator\conv{conv}

\theoremstyle{definition}
\newtheorem{Def}{Definition}[section]
\newtheorem{definition}[Def]{Definition}
\newtheorem{exm}[Def]{Example}
\newtheorem{Exm}[Def]{Example}
\newtheorem{rem}[Def]{Remark}
\theoremstyle{plain}

\newtheorem{proposition}[Def]{Proposition}

\newtheorem{theorem}[Def]{Theorem}
\newtheorem*{thm*}{Theorem}
\newtheorem{lem}[Def]{Lemma}

\newtheorem*{cor*}{Corollary}

\newtheorem*{con*}{Conjecture}

\newcommand{\ceil}[1]{\left\lceil #1 \right\rceil}
\newcommand{\floor}[1]{\left\lfloor #1 \right\rfloor}
\DeclareMathOperator{\Ehr}{Ehr}
\DeclareMathOperator{\aff}{aff}

\makeatletter
\@namedef{subjclassname@2020}{%
  \textup{2020} Mathematics Subject Classification}
\makeatother

\title[Alcoved Polytopes Unimodal $h^*$?]{Do alcoved lattice polytopes have unimodal $h^*$-vector?}
\author{Rainer Sinn}
\address{Augustusplatz 10, Mathematisches Institut, Universit\"at Leipzig, 04109 Leipzig, Germany} 
\email{rainer.sinn@uni-leipzig.de}
\author{Hannah Sjöberg}
\address{Arnimallee 2, Institut für Mathematik, Freie Universit\"at Berlin, 14195 Berlin, Germany}
\pdfinfo{%
  /Title    (Title)
  /Author   ()
  /Creator  ()
  /Producer ()
  /Subject  ()
  /Keywords ()
}

\pagestyle{headings}
\begin{document}

\subjclass[2020]{Primary: 52B11, 52B20, 13F55}

\begin{abstract}
We show that $h^*$-vectors of alcoved polytopes $P\subset \R^n$ (of Lie type $\mathcal{A}$) are unimodal if they contain interior lattice points and their facets have lattice distance $1$ to the set of interior lattice points. The maximal possible such distance for general alcoved polytopes is shown to be $\dim(P) - 1$.
A secondary purpose of the paper is to serve as a guide to previous work surrounding unimodality of $h^*$-vectors of alcoved polytopes and related questions.
\end{abstract}
\maketitle

\section*{Introduction}

A finite sequence $(s_1, s_2, \ldots, s_n)$ is called \emph{unimodal} if there exists an index $k \in \{1, \ldots, n\}$ such that $s_1 \le \ldots  \le s_k \ge \ldots \ge s_n$. We are particularly interested in this property for sequences arising from lattice polytopes, which are polytopes in $\R^n$ whose vertices lie in $\Z^n$, namely $h^*$-vectors. Closely related (at least in our case) are face vectors of (regular unimodular) triangulations, namely $h$-vectors.
There is a variety of conjectures and theorems about the unimodality of $h$-vectors and $h^*$-vectors of different objects.
The articles \cite{Sta89} by Stanley, \cite{Bre94} by Brenti, and \cite{Bra14} by Br\"and\'en are excellent surveys on unimodality in combinatorics. We focus particularly on unimodality in Ehrhart theory for which we refer to the survey \cite{Bra16} by Braun.

The $h^*$-polynomial of a lattice polytope is derived from its Ehrhart series. For $t\in \N$, write $L_P(t) = |(tP)\cap \Z^n|$ for the number of lattice points in the dilate $tP$ of the lattice polytope $P\subset \R^n$. Its Ehrhart series is the generating function of this lattice count, i.e.
\[
	\Ehr_P(z) = 1 + \sum_{t\geq 1} L_P(t) z^t.
\]
It turns out that this series is actually a rational function with denominator $(1-z)^{d+1}$, where $d$ is the dimension of $P$. The numerator is a polynomial of degree at most $d$ and its coefficient vector is the $h^*$-polynomial of $P$, more precisely
\[
	\Ehr_P(z) = \frac{h_d^* z^d + h_{d-1}^* z^{d-1} + \ldots + h_1^* z + h_0^*}{(1-z)^{d+1}}.
\]
The $h^*$-vector of $P$ is the vector of coefficients of the $h^*$-polynomial. 
We are interested in unimodality of the $h^*$-vector. 

Our main \Cref{thm:interiorlatticepoints} says that the $h^*$-vector is unimodal for a special class of lattice polytopes called alcoved polytopes of Lie type $\mathcal{A}$ whose facets have lattice distance $1$ to the interior lattice points. We define these notions in \Cref{sec:prelims}. We also show that our proof strategy does not work if we drop the assumption about the lattice distance in \Cref{rem:proofstrategy}. Along the way, we show \Cref{thm:UTunimodal} stating that the $h^*$-vector is unimodal for 
any $d$-dimensional lattice polytope $P$ that admits a unimodular triangulation $\Delta$ such that the induced simplicial complex on $(\partial P)\cap \Z^d$ is a triangulation of $\partial P$. 

Here is some historical context of this result with other classes of lattice polytopes with unimodal $h^*$-vectors.
\begin{thm*}[{Hibi~\cite{Hi92book}}]
	Reflexive lattice polytopes up to dimension $5$ have unimodal $h^*$-vectors.	
\end{thm*}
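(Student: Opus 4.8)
The plan is to exploit the strong symmetry that reflexivity forces on the $h^*$-vector and thereby reduce unimodality, in dimension at most $5$, to just two inequalities. First I would recall that a lattice polytope $P$ is reflexive precisely when (after translating its unique interior lattice point to the origin) all of its facets have lattice distance $1$ from that point, and that by Hibi's palindromic theorem this is equivalent to the $h^*$-vector being symmetric, i.e. $h_i^* = h_{d-i}^*$ for all $i$, where $d = \dim P$. In particular $h_d^* = h_0^* = 1$, consistent with $h_d^*$ counting the single interior lattice point. Because the sequence is palindromic, unimodality of $(h_0^*,\dots,h_d^*)$ is equivalent to the first half $h_0^* \le h_1^* \le \dots \le h_{\lfloor d/2\rfloor}^*$ being nondecreasing.

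For $d \le 5$ we have $\lfloor d/2\rfloor \le 2$, so the only comparisons to establish are $h_0^* \le h_1^*$ and, when $d \ge 4$, $h_1^* \le h_2^*$. The first amounts to $h_1^* \ge 1$: since $h_1^* = |P\cap\Z^d| - (d+1)$ and a $d$-dimensional $P$ has at least $d+1$ vertices together with a distinct interior lattice point, we get $|P\cap\Z^d| \ge d+2$ and hence $h_1^* \ge 1 = h_0^*$. The second inequality $h_1^* \le h_2^*$ is exactly a special case of Hibi's lower bound theorem, which asserts $h_1^* \le h_i^*$ for all $1 \le i \le d-1$ whenever $P$ contains an interior lattice point; this applies here since $2 \le d-1$ for $d \ge 4$. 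Combining these two facts with palindromy yields the nondecreasing first half, and therefore unimodality, in each of the dimensions $1,2,3,4,5$.

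The crux — and the reason the bound is exactly $5$ — is that the first comparison not covered by these two facts is $h_2^* \le h_3^*$, which enters only once $\lfloor d/2\rfloor \ge 3$, i.e. at $d = 6$. Neither palindromy nor Hibi's lower bound theorem controls that step, so the argument genuinely halts at dimension $5$. I would expect that the conceptual heart of the statement is precisely this boundary phenomenon rather than any single estimate: below $d = 6$ no delicate computation is needed, the entire content being the interplay of the palindromy constraint with the lone inequality $h_1^* \le h_2^*$, while in dimension $6$ the method must fail, matching the known existence of reflexive polytopes with non-unimodal $h^*$-vectors.
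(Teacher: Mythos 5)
The paper does not actually prove this statement: it appears as quoted background, with a citation to Hibi's book \cite{Hi92book}, so there is no internal proof to compare yours against. Judged on its own merits, your argument is correct, and it is the standard modern route to this fact. The two external inputs you invoke are genuine theorems used correctly: (i) Hibi's palindromic criterion (stated later in the paper, citing \cite{Hi91}), giving $h_i^* = h_{d-i}^*$ for a reflexive $d$-polytope, and (ii) Hibi's Lower Bound Theorem, giving $h_1^* \le h_i^*$ for $1 \le i \le d-1$ whenever $P$ has an interior lattice point. Your reduction is also sound: for a palindromic sequence, unimodality is equivalent to the first half being nondecreasing, since a descent $h_i^* > h_{i+1}^*$ with $i+1 \le \lfloor d/2\rfloor$ would by symmetry be followed by an ascent from $h_{d-i-1}^*$ to $h_{d-i}^*$, violating unimodality, while the converse is immediate. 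As $\lfloor d/2 \rfloor \le 2$ for $d \le 5$, only $h_0^* \le h_1^*$ and (for $d \ge 4$) $h_1^* \le h_2^*$ are needed, and your verifications of both are correct: $h_1^* = |P\cap\Z^d| - (d+1) \ge 1$ because the $d+1$ affinely independent vertices and the interior lattice point are distinct, and $h_1^* \le h_2^*$ follows from the Lower Bound Theorem since $2 \le d-1$ when $d \ge 4$. Your diagnosis of why the method halts at $d=5$ — the first uncontrolled comparison $h_2^* \le h_3^*$ enters at $d=6$ — matches the paper's remark that Mustaţă and Payne produced reflexive simplices in every dimension greater than $5$ with non-unimodal $h^*$-vector. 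One caveat for the write-up: the Lower Bound Theorem is a substantial external result (proved by Hibi in a separate paper, postdating the 1992 book), so it should be cited explicitly; your argument is a proof modulo that reference, not a self-contained one.
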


This theorem does not hold for higher dimensions. Mustaţă and Payne \cite[Ex. 1.1]{MP05}, \cite[Thm. 1.4]{Pa08} showed that there exist reflexive polytopes (even simplices) of all dimensions greater than $5$ without unimodal $h^*$-vectors.

Bruns and Römer showed that if a reflexive (or more generally a Gorenstein) polytope has a regular unimodular triangulation, then its $h^*$-vector is unimodal.
\begin{thm*}[{Bruns \& Römer~\cite{BrRo07}}]
	Gorenstein lattice polytopes with regular unimodular triangulation have unimodal $h^*$-vectors.	
\end{thm*}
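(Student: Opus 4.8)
The plan is to identify $h^*(P)$ with the $h$-vector of a smooth projective toric variety and then apply the classical hard Lefschetz theorem; the regular unimodular triangulation is exactly what makes this identification possible. Write $d=\dim P$. The first step is the standard fact that $h^*(P)=h(\Delta)$, the combinatorial $h$-vector of the triangulation $\Delta$: every maximal simplex of a unimodular triangulation has normalized volume $1$, and a shelling of the regular (hence shellable) triangulation shows that the Ehrhart $h^*$-polynomial equals the $h$-polynomial of $\Delta$. In particular $h^*(P)$ is independent of the chosen unimodular triangulation, which gives me the freedom to work with a convenient one.

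I would treat the reflexive case first, where $0$ is the unique interior lattice point of $P$ and every facet has lattice distance $1$ from $0$. Restricting $\Delta$ to the boundary yields a regular unimodular triangulation $\Lambda$ of the sphere $\partial P$, and coning from $0$ gives the triangulation $\{0\}*\Lambda$ of $P$. Reflexivity makes each $\conv(\{0\}\cup\sigma)$ with $\sigma\in\Lambda$ unimodular, so $\{0\}*\Lambda$ is a unimodular triangulation using all of $P\cap\Z^d$; since coning preserves the $h$-vector, $h^*(P)=h(\{0\}*\Lambda)=h(\Lambda)$. Now the cones over the faces of $\Lambda$ form a complete simplicial fan $\Sigma$ that is unimodular (because $\Lambda$ is) and projective (because $\Lambda$, being the restriction of a regular triangulation, is regular). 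Hence $X_\Sigma$ is a smooth projective toric variety of complex dimension $d$ with $\dim H^{2i}(X_\Sigma)=h_i(\Lambda)$ and vanishing odd cohomology.

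The decisive step is then hard Lefschetz for $X_\Sigma$: cupping with a Kähler class gives injections $H^{2i}(X_\Sigma)\hookrightarrow H^{2i+2}(X_\Sigma)$ for $2i<d$, so $h_0\le h_1\le\cdots\le h_{\lfloor d/2\rfloor}$, while Poincaré duality (equivalently the Dehn--Sommerville relations for the sphere $\Lambda$, equivalently the Gorenstein symmetry of $h^*$) gives $h_i=h_{d-i}$; together these say that $h(\Lambda)=h^*(P)$ is unimodal. For a Gorenstein polytope of arbitrary index I would run the same argument inside the Gorenstein cone of $P$, using the interior generator of the canonical module in place of $0$ to produce a complete smooth projective fan whose $h$-vector is again $h^*(P)$; this is more technical but structurally identical.

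I expect this hard Lefschetz input to be the real obstacle, and it is where every hypothesis is used: regularity provides the projective embedding and hence the Lefschetz class, unimodularity provides smoothness so that the $h$-numbers genuinely are Betti numbers, and the Gorenstein (reflexive) condition lets one cone off an interior lattice point to realize $h^*(P)$ as the $h$-vector of a sphere. That one cannot omit the triangulation is shown by Stanley's symmetric but non-unimodal Gorenstein $h$-vector $(1,13,12,13,1)$: symmetry alone never forces unimodality, so the geometric realization as the Betti sequence of a smooth projective toric variety is essential. The one technical point I would double-check is that the restricted triangulation $\Lambda$ is regular and that the induced fan $\Sigma$ is projective; both follow from the height function defining $\Delta$, but they deserve careful verification.
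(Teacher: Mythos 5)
This statement is quoted in the paper purely as background and attributed to Bruns and R\"omer \cite{BrRo07}; the paper contains no proof of it, so your proposal can only be measured against the literature and its own internal logic. Your reflexive case ($k=1$) is essentially correct and follows the classical route: the faces of $\Delta$ contained in $\partial P$ triangulate $\partial P$ unimodularly, coning from the unique interior lattice point gives a unimodular triangulation with $h^*(P)=h(\Lambda)$, and hard Lefschetz for the smooth projective toric variety of the fan over $\Lambda$ yields unimodality together with the Gorenstein symmetry. The projectivity point you flag is real but can be settled: since each facet $F$ is a face of $P$, any convex combination of lattice points of $P$ landing in $F$ uses only points of $F$, so the regular subdivision of $\partial P\cap\Z^d$ induced by the heights $\omega$ defining $\Delta$ agrees facet by facet with $\Delta|_F=\Lambda|_F$; assigning height $-M$ to the origin with $M\gg 0$ then induces exactly $\{0\}*\Lambda$, and the corresponding lower-hull function is a strictly convex support function for the fan $\Sigma$. (Note that you correctly use the faces of $\Delta$ lying in $\partial P$, not the induced subcomplex on the boundary lattice points; the latter can fail to triangulate $\partial P$, as the paper's example of $[-1,1]^2$ with its alcoved triangulation shows.)

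The genuine gap is the Gorenstein case of index $k\ge 2$, which you compress into one sentence as ``structurally identical.'' It is not, and it is precisely the content of \cite{BrRo07}. For $k\ge 2$ the polytope $P$ has no interior lattice points at all, so there is nothing to cone from; and in the cone $\cone(P\times\{1\})\subset\R^{d+1}$ the generator of the canonical module --- the unique interior lattice point of $kP$ sitting at height $k$ --- is not a vertex of the triangulation of the cone induced by $\Delta$, so ``using it in place of $0$'' produces no simplicial fan, and no identity of the form $h^*(P)=h(\text{sphere around that point})$ comes for free. The actual reduction requires real work: one shows that the Stanley--Reisner ring $K[\Delta]$ is Gorenstein (via the degeneration from the Gorenstein Ehrhart ring), hence that $\Delta=\mathrm{core}(\Delta)*\sigma$ for a suitable simplex $\sigma$ with $h^*(P)=h(\Delta)=h(\mathrm{link}(\Delta,\sigma))$, and then that this link --- a priori only a Gorenstein homology sphere, for which unimodality was unknown in 2007 --- is in fact the boundary complex of a simplicial polytope, using regularity of $\Delta$ once more; only then does the $g$-theorem apply. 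Without some version of this step your argument proves only the reflexive case. As a side remark, the machinery the paper itself uses for its main theorem (Adiprasito's Lefschetz theorems applied to triangulated balls and their partition complexes) is a different, more recent route that avoids polytopality arguments altogether.
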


It has been conjectured that the property of having a unimodular triangulation can be weakened even more to the more general condition of having the \textit{integer decomposition property} (IDP).
\begin{con*}[Hibi \& Ohsugi \cite{HO06}]
A lattice polytope which is Gorenstein and IDP has unimodal $h^*$-vector.
\end{con*}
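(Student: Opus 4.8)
The plan is to recast the statement in commutative algebra and to try to produce a Lefschetz element. Let $P \subset \R^d$ be Gorenstein and IDP, and let $R$ be its Ehrhart (toric) ring, graded so that $\dim_K R_t = L_P(t)$. The IDP hypothesis is precisely what forces $R$ to be generated in degree $1$, i.e.\ standard graded; by Hochster's theorem the normal affine monoid ring $R$ is Cohen--Macaulay, and ``$P$ Gorenstein'' says exactly that $R$ is a Gorenstein ring, equivalently that its $h$-vector is symmetric. Since $\Ehr_P(z)$ is the Hilbert series of $R$, the $h^*$-vector of $P$ is the $h$-vector of $R$, so it suffices to prove that this $h$-vector is unimodal. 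Passing to the quotient $\bar R$ by a homogeneous system of parameters of degree $1$ yields an Artinian standard graded Gorenstein $K$-algebra with the same $h$-vector, and the problem becomes: show $\bar R$ has unimodal Hilbert function.

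The natural route is the \emph{weak Lefschetz property}: for a standard graded Artinian algebra, the existence of a linear form $\ell$ for which every map $\times\ell\colon \bar R_i \to \bar R_{i+1}$ has maximal rank forces the Hilbert function to be unimodal (once a map is surjective, standard-gradedness makes all later maps surjective, and the earlier maps must then be injective). I would therefore aim to exhibit such an $\ell$. When $P$ carries a \emph{regular} unimodular triangulation $\Delta$ this is exactly the Bruns--Römer setting: $\Delta$ gives a squarefree initial degeneration of $R$ to the Stanley--Reisner ring of $\Delta$, which is Gorenstein* (a homology sphere) because $P$ is Gorenstein, and whose $h$-vector equals $h^*(P)$. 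A sensible first step is to revisit that proof in order to isolate exactly where regularity, i.e.\ shellability and a combinatorial Lefschetz element, is used.

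The main obstacle, and the reason this remains a conjecture, is that IDP is strictly weaker than admitting a unimodular triangulation: there are IDP polytopes with no (regular) unimodular triangulation, so the degeneration to a Stanley--Reisner sphere is unavailable and the combinatorial source of a Lefschetz element vanishes. One would then need a purely ring-theoretic proof that a normal standard graded Gorenstein toric algebra has the weak Lefschetz property, exploiting normality of the monoid rather than Gorenstein-ness alone; indeed Stanley exhibited Gorenstein standard graded domains with non-unimodal $h$-vector, so toricness must be used in an essential way. I expect this to be where the difficulty concentrates. Realistic partial strategies, in the spirit of the present paper's treatment of alcoved polytopes of type $\mathcal{A}$, are to prove the stronger properties of $\gamma$-nonnegativity or real-rootedness of $h^*$ on structured subfamilies (order polytopes and Hibi rings, or otherwise well-behaved monoids) where a combinatorial model persists, or to exhibit a suitable triangulation or an explicit decomposition of the $h^*$-vector class by class.
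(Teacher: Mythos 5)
The statement you were asked to prove is not a theorem of this paper: it is the Hibi--Ohsugi conjecture, quoted in the introduction purely as background, and it is open. The paper contains no proof of it, so there is nothing to compare your argument against --- and, to your credit, your write-up does not claim to contain a proof either. The reductions you perform are all sound: IDP makes the Ehrhart ring standard graded, Hochster's theorem gives Cohen--Macaulayness, Gorensteinness of $P$ is equivalent to Gorensteinness of the ring, the Artinian reduction by a linear system of parameters preserves the $h$-vector, and the weak Lefschetz property would indeed force unimodality by the surjectivity-propagation argument you sketch. But the decisive step --- exhibiting a linear form of maximal rank in every degree for the Artinian reduction when $P$ is merely IDP --- is exactly where you stop, as you yourself acknowledge. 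This is not a gap that could be filled in by routine work: it is the entire content of the conjecture, namely the assertion that a Lefschetz-type conclusion survives once the combinatorial source of a Lefschetz element (a regular unimodular or shellable triangulation, as in Bruns--R\"omer, or as in this paper's \Cref{thm:UTunimodal} and \Cref{lem:triangulation} for alcoved polytopes with the lattice-distance-one hypothesis) is removed. So the verdict is: correct framing, correct identification of the obstruction, no proof.

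One factual correction. You assert that Stanley exhibited Gorenstein standard graded \emph{domains} with non-unimodal $h$-vector. The known non-unimodal Gorenstein examples (such as Stanley's $(1,13,12,13,1)$) are Artinian algebras that are not domains; if a Gorenstein \emph{domain} with non-unimodal $h$-vector existed, it would refute the conjecture of Stanley on Cohen--Macaulay integral domains that this paper quotes as open. The polytopal examples showing that Gorenstein alone does not suffice are the reflexive simplices of Mustaţă and Payne cited in the paper's introduction, and those fail to be IDP. Your broader point --- that normality/IDP of the monoid must enter the argument in an essential way --- is correct, but for a slightly different reason than the one you give.
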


Even more generally, there are no known examples of IDP polytopes without unimodal $h^*$-vectors. The next question or conjecture is part of a conjecture of Stanley's that standard graded Cohen--Macaulay integral domains have unimodal $h$-vectors.

\begin{con*}[{Stanley, see \cite[Question~1.1]{SvL13}}]
	IDP polytopes have unimodal $h^*$-vectors.	
\end{con*}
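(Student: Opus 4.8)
The plan is to translate the conjecture into commutative algebra and then to attack unimodality through a Lefschetz-type property. To an IDP lattice polytope $P \subset \R^n$ of dimension $d$ one associates its Ehrhart (toric) ring $R = \bigoplus_{t \ge 0} R_t$, where $R_t$ is spanned by the lattice points of $tP$ placed at height $t$. The integer decomposition property is exactly the statement that $R$ is generated by $R_1$ as a $\K$-algebra, i.e.\ that $R$ is standard graded, and it forces the underlying affine semigroup to be saturated, hence normal. By Hochster's theorem a normal affine semigroup ring is Cohen--Macaulay, so $R$ is a standard graded normal Cohen--Macaulay domain whose Hilbert series is $\Ehr_P(z)$. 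In particular the $h$-vector of $R$ coincides with the $h^*$-vector of $P$, and the conjecture becomes the assertion that the $h$-vector of such a ring is unimodal --- precisely the normal-semigroup instance of Stanley's conjecture on standard graded Cohen--Macaulay domains referred to above.

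First I would pass to an Artinian reduction. Choosing (after possibly enlarging $\K$) a homogeneous system of parameters $\theta_1, \ldots, \theta_{d+1} \in R_1$, which by Cohen--Macaulayness is a regular sequence, the quotient $A = R/(\theta_1, \ldots, \theta_{d+1})$ is a standard graded Artinian $\K$-algebra with $\dim_{\K} A_i = h^*_i$. Unimodality of the $h^*$-vector would then follow at once from a \emph{weak Lefschetz property}: the existence of a linear form $\ell \in A_1$ such that every multiplication map $\cdot\, \ell \colon A_i \to A_{i+1}$ has maximal rank. The natural source of such an element is geometric: when $P$ is the polytope of a smooth projective toric variety, the hard Lefschetz theorem applied to its cohomology ring supplies a strong Lefschetz element and yields unimodality. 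The task is therefore to manufacture a Lefschetz element in the general, singular case, where no such cohomological input is available.

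A complementary, more combinatorial route is to exploit triangulations, as in the Bruns--Römer theorem and in \Cref{thm:UTunimodal}. When $P$ admits a regular unimodular triangulation $\Delta$, the $h^*$-vector equals the $h$-vector of $\Delta$, and a shelling together with the symmetry present in the Gorenstein case forces unimodality. I would try to push this beyond the unimodular case by replacing $\Delta$ with a regular but possibly non-unimodular triangulation and controlling the resulting interior (box) polynomials so that their contributions do not destroy unimodality. The decisive obstacle, and the reason the conjecture remains open, lies exactly in the gap between the algebraic hypothesis and these geometric structures: IDP does \emph{not} imply the existence of a unimodular triangulation, so the combinatorial argument has no triangulation to run on, while Lefschetz elements are known only under smoothness or Gorenstein-type symmetry. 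Any complete proof would in particular settle the still-open Gorenstein subcase of Hibi and Ohsugi, so I expect the heart of the difficulty to be producing --- from the generation property alone --- enough rigidity (a Lefschetz element, or a sufficiently tame triangulation) to force the inequalities $h^*_0 \le \cdots \le h^*_k \ge \cdots \ge h^*_d$.
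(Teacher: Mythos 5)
The statement you were asked to prove is not a theorem of the paper at all: it is Stanley's open conjecture (quoted from \cite{SvL13}), reproduced in the introduction purely as historical context, and the paper contains no proof of it --- its actual contribution, \Cref{thm:interiorlatticepoints}, treats only alcoved polytopes whose facets lie at lattice distance $1$ from the interior lattice points, via \Cref{thm:UTunimodal}. Your submission, read as a proof, therefore has a gap that coincides with its entire content. The opening translation is correct and standard: IDP is equivalent to the Ehrhart ring $R$ being generated in degree one, $R$ is Cohen--Macaulay by Hochster, an Artinian reduction $A = R/(\theta_1,\ldots,\theta_{d+1})$ has Hilbert function $h^*(P)$, and a weak Lefschetz element for $A$ would give unimodality. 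But every step after that is conditional (``I would try'', ``the task is therefore to manufacture''), no Lefschetz element is produced, no triangulation is produced, no inequality $h^*_i \le h^*_{i+1}$ is established, and you end by conceding that the conjecture is open. So nothing is proved; what you have written is an accurate map of the obstructions, which is consistent with the paper's framing but is not a proof of anything.

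One concrete mathematical error in the sketch is worth flagging: the claim that, for $P$ the polytope of a smooth projective toric variety, hard Lefschetz on its cohomology ring ``supplies a strong Lefschetz element and yields unimodality.'' The cohomology ring of the smooth toric variety $X_P$ computes the $h$-vector of its simplicial fan (symmetric by Poincar\'e duality); it is \emph{not} the Artinian reduction of the section ring, and its Poincar\'e polynomial is not $h^*(P)$ --- already for $P=[0,3]$ one has $X_P = \P^1$ with Betti numbers $(1,1)$ while $h^*(P)=(1,2)$. Indeed, unimodality of $h^*$ for smooth lattice polytopes is itself an open problem, and the Musta\c{t}\u{a}--Payne examples \cite{MP05} cited in the paper show that the related hope of realizing $h^*$ as an intersection-cohomology Poincar\'e polynomial fails for reflexive simplices in dimension at least $6$. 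A smaller terminological slip: IDP does not ``force the semigroup to be saturated'' --- the Ehrhart ring is saturated by construction; IDP says the subalgebra generated in degree one already equals it. Your closing diagnosis, that IDP supplies neither a unimodular triangulation (so the Betke--McMullen/Bruns--R\"omer route of \Cref{thm:UTunimodal} has nothing to run on) nor the symmetry needed in the Gorenstein case \cite{HO06}, is exactly the paper's own explanation of why the conjecture remains open.
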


A special class of polytopes that is conjectured to have 
unimodal $h^*$-vectors is the class of \emph{order polytopes}:
Let $P=(\{p_1, \ldots, p_n\},\preccurlyeq)$ be a finite poset.
The \emph{order polytope} $\mathcal{O}(P)\subset \R^{n}$
is defined by the inequalities:
\begin{align*}
	0 \le x_i \le 1  &\text{ for all } i \in \{1, \ldots, n\},\\
	x_i \le x_j  &\text{ if } p_i \preccurlyeq p_j.
\end{align*}
Order polytopes have regular unimodular triangulations.
We will see an example of such a triangulation for the more general class of alcoved polytopes in Definition~\ref{def:alcoved}.

Alcoved polytopes have regular unimodular triangulations (but they are not necessarily Gorenstein or reflexive). 
Here is a general result in the direction of unimodality of $h^*$-vectors that gives us about half of the inequalities in our main theorem.
\begin{thm*}[{Hibi \& Stanley, see Athanasiadis~\cite[Theorem~1.3]{Ath04}}]
Let $P$ be a $d$-dimensional lattice polytope with a regular unimodular triangulation.
Then:
\begin{align*}
\begin{cases}
&h^*_i(P)\ge h^*_{d+1-i}(P) \hspace{82pt} \text{ for } 1\le i \le \floor{\frac{d+1}{2}},\\
& h^*_{\floor{\frac{d+1}{2}}}(P)\ge \ldots \ge h^*_{d-1}(P)\ge h^*_d(P),\\
& h^*_i(P)\le \binom{h^*_1(P) +i-1}{i} \hspace{80pt} \text{ for } 0 \le i \le d.
\end{cases}
\end{align*}
\end{thm*}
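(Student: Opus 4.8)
The plan is to pass from the polytope to commutative algebra. Since $P$ has a regular unimodular triangulation $\Delta$, the Ehrhart cone algebra $R=\K[P]=\bigoplus_{t\ge 0}R_t$, where $R_t$ is spanned by the lattice points of $tP$ placed at height $t$, is a standard graded, normal (in particular IDP) Cohen--Macaulay domain of Krull dimension $d+1$ whose Hilbert series is $\Ehr_P(z)$; equivalently one may invoke Stanley's theorem that $h^*_P$ equals the $h$-vector of $\Delta$. Choosing a linear system of parameters $\theta_1,\dots,\theta_{d+1}\in R_1$ (possible over an infinite field), the Artinian reduction $A=R/(\theta_1,\dots,\theta_{d+1})$ is standard graded with $\dim_{\K}A_i=h^*_i$. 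The third family of inequalities is then immediate: $A$ is generated as a $\K$-algebra by $A_1$ with $\dim_{\K}A_1=h^*_1$, hence is a quotient of a polynomial ring in $h^*_1$ variables, and comparing graded pieces gives $h^*_i\le \binom{h^*_1+i-1}{i}$.

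For the first and second families I would bring in the canonical module $\omega_R$, which for a normal cone algebra is the ideal spanned by the interior lattice points of the dilates $tP$. There is a short exact sequence $0\to \omega_R\to R\to R_\partial\to 0$, where $R_\partial=\K[\partial P]$ is the cone algebra of the boundary. By Ehrhart--Macdonald reciprocity the Hilbert series of $\omega_R$ has numerator $\sum_{j}h^*_{d+1-j}z^{j}$, so the reversed vector governs the canonical module. Reducing the exact sequence by a common regular sequence of length $d$ (which exists, all three modules being Cohen--Macaulay of dimension $\ge d$, so the sequence stays exact) and comparing Hilbert functions yields the key identity
\[
 h^*_i-h^*_{d+1-i}=\bar h_i-\bar h_{i-1},
\]
where $\bar h$ is the $h$-vector of the induced triangulation $\partial\Delta$ of $\partial P$. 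Since $\partial\Delta$ triangulates a sphere, $R_\partial$ is Gorenstein, so $\bar h$ is symmetric by Dehn--Sommerville; as it also carries a regular unimodular triangulation, Bruns--Römer (equivalently the $g$-theorem) shows $\bar h$ is unimodal. Non-negativity of the differences $\bar h_i-\bar h_{i-1}$ up to the middle is then exactly the first family $h^*_i\ge h^*_{d+1-i}$.

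The second family, the monotone tail $h^*_{\floor{(d+1)/2}}\ge\dots\ge h^*_d$, is where I expect the real difficulty. The identity above only pins down the antisymmetric part $h^*_i-h^*_{d+1-i}$; writing $g_i=\bar h_i-\bar h_{i-1}$, the tail differences expand as $h^*_i-h^*_{i+1}=(h^*_{d+1-i}-h^*_{d-i})+(g_i-g_{i+1})$, which also involves consecutive differences of $h^*$ in the lower half, i.e.\ precisely the ascending behaviour that this theorem does not assert. The natural route is the symmetric decomposition $h^*(z)=a(z)+z\,b(z)$ with $a_i=a_{d-i}$ and $b_i=b_{d-1-i}$; the identity forces $a=\bar h$, so $a$ is symmetric and unimodal by the previous paragraph, and then $h^*_i=a_i+b_{i-1}$ exhibits the tail as a sum of two sequences that are non-increasing in the relevant range \emph{provided} $b$ is also unimodal. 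The crux is therefore to prove that the complementary piece $b$, which measures the interior against the boundary, is unimodal. This is the step I expect to be hardest: it is the point at which the Cohen--Macaulayness and domain structure of $R$ must be used essentially rather than formally, and where Stanley's deeper monotonicity input for the canonical module enters, since unimodality of a symmetric M-sequence does not follow from Cohen--Macaulayness alone.
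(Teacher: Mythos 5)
First, a point of reference: the paper does not prove this statement at all --- it is quoted as a known theorem with a pointer to Athanasiadis \cite[Theorem~1.3]{Ath04} --- so your proposal can only be measured against the classical arguments and against the closely related machinery the paper develops for \Cref{thm:UTunimodal}. Your proof of the third family is correct and is the standard one (IDP, hence standard gradedness of the Ehrhart ring, makes the Artinian reduction a quotient of a polynomial ring in $h^*_1$ variables). Your setup for the first family is also essentially the classical mechanism: $\omega_R$ is the ideal of interior lattice points, the quotient $R_\partial=R/\omega_R$ is indeed Cohen--Macaulay (your unproven assertion is true: a canonical ideal always has Gorenstein quotient, see \cite[Prop.~3.3.18]{BrHe09}), reduction by $d$ generic linear forms preserves exactness, and the identity $h^*_i-h^*_{d+1-i}=\bar h_i-\bar h_{i-1}$ together with $\bar h=h(\partial\Delta)$ is correct. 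The gap is the final step. Bruns--R\"omer \cite{BrRo07} is a statement about Gorenstein \emph{polytopes} and says nothing about the triangulated sphere $\partial\Delta$; what you actually need is $g$-nonnegativity up to the middle for $\partial\Delta$, and Stanley's $g$-theorem \cite{Sta80} covers only boundary complexes of simplicial polytopes. The sphere $\partial\Delta$ has vertex set all of $(\partial P)\cap\Z^d$, realized on $\partial P$, so it is star-shaped but there is no reason for it to be polytopal --- and $g$-nonnegativity for star-shaped spheres was exactly a part of the $g$-conjecture that remained open until Adiprasito \cite{A18}. So either you invoke \cite{A18} explicitly (legitimate; it is what the paper does for \Cref{thm:UTunimodal}), or you must use \emph{regularity} of the triangulation --- which your argument never uses anywhere, a warning sign in itself --- to replace $\partial\Delta$ by a polytopal sphere, e.g.\ by lifting $\Delta$ via its height function and coning with an apex far above $P$, so that $\partial\Delta$ becomes a vertex link in the boundary of a simplicial $(d+1)$-polytope. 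Gorensteinness of $R_\partial$ alone cannot close this gap, since Artinian Gorenstein Hilbert functions need not be unimodal.

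For the second family the proposal is, as you yourself say, incomplete, and the reduction you chose points the wrong way: unimodality of the complementary piece $b$ (your identities $a=\bar h$ and $b_{j-1}=h^*_j-\bar h_j$ are correct) is at least as strong as the inequality being proved, and neither classically nor with modern tools is it the easier statement. The missing idea is the surjectivity argument that the paper spells out in the proof of \Cref{thm:UTunimodal}, following \cite[Lemma~2.2]{STANLEY1993251}: complete the ball $\Delta$ to a sphere $\Sigma=\Delta\cup(c\ast\partial\Delta)$ by coning over its boundary, obtain the Lefschetz property for $K[\Sigma]/\Theta$ (classically this is again where regularity must enter, via polytopality of $\Sigma$; in general via \cite{A18}), and push a generic Lefschetz element $\omega$ through the graded surjection $K[\Sigma]/\Theta\twoheadrightarrow K[\Delta]/\Theta$: surjectivity of $\cdot\,\omega^{d+1-2j}\colon (K[\Sigma]/\Theta)_j\to(K[\Sigma]/\Theta)_{d+1-j}$ upstairs forces surjectivity downstairs, hence $h_{\floor{\frac{d+1}{2}}}(\Delta)\ge\cdots\ge h_d(\Delta)$, which is exactly family (ii). As written, then, the proposal establishes (iii), establishes (i) only modulo the citation repair above, and does not establish (ii).
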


\section{Preliminaries}\label{sec:prelims}
We write $[d]$ for the set $\{1,2,\ldots,d\}$.
\subsection{Alcoved Polytopes}
\begin{definition}\label{def:alcoved}
A hyperplane coming from an affine Coxeter arrangement of type $\mathcal{A}_d$ is a hyperplane of the form
\[
	H_d(i,j,k)=\{
	\begin{pmatrix}
	y_1\\ \vdots \\ y_d
	\end{pmatrix}
	\in \R^d\mid y_i-y_j= k_{ij}\}
\]
 for some $k_{ij}\in \Z$, $i,j \in \{0, \ldots, d\}$ and $y_0=0$.
 We will call such hyperplanes \emph{alcove hyperplanes}.
 
A $d$-dimensional polyhedron is called an \emph{alcoved polyhedron} (of Lie type $\mathcal{A}$) if all of its facet-defining hyperplanes are alcove hyperplanes.
If $P$ is bounded it is called an \emph{alcoved polytope} of Lie type $\mathcal{A}$.
In the following by an alcoved polytope we always mean an alcoved polytope of Lie type $\mathcal{A}$.
\end{definition}

The $\mathcal{H}$-description of an alcoved $d$-polyhedron $P$ with $m$ facets can be written as
\[P=\{x\in \R^d \mid Mx\le b\},\]
for $b\in \Z^m$ and where $M$ is an $(m\times d)$-matrix with row vectors $a_k \in \{e_i\colon i\in [d]\} \cup \{e_i - e_j\colon i,j\in [d]\}$ for all $k \in \{1, \ldots m\}$.
This matrix $M$ is always a totally unimodular matrix, i.e. every minor of $M$ is in $\{0,\pm 1\}$. In particular, this implies that alcoved polytopes are lattice polytopes (see \cite[Sect.~7.1]{Bar17}).

We obtain the \emph{alcoved triangulation} of an alcoved $d$-polytope $P$ by subdividing it by all alcove hyperplanes $H_d(i,j,k)$ ($i,j\in \{0\}\cup [d]$, $k\in \Z$), see \cite[Section 2.3]{LaPo07}.
The simplices of the alcoved triangulation are called \emph{alcoves}. 
The unimodular triangulation is regular, for example via the lifting function
 \[
 	y_{d+1}=\sum_{i=1}^d y_i^2 +\sum_{\{i,j\}\in \{1, \ldots, d\}} (y_i-y_j)^2.
 \]
  
Examples of alcoved polytopes include concrete realizations of hypersimplices and order polytopes (see \cite{LaPo07} for more details and more examples of alcoved polytopes).
\begin{Exm}[Hypersimplex]
	The hypersimplex $\Delta_{d,k}$ is usually defined as the set of vectors $x = (x_1,x_2,\ldots,x_d)$ in $[0,1]^d\subset \R^d$ whose coordinates sum to $k$, i.e.~$\sum_{i=1}^d x_i = k$. After the affine change of coordinates $z_j = \sum_{i=1}^j x_i$, the hypersimplex can be defined by the inequalities 
	\[
		\begin{array}{cl}
			0 \leq z_1 \leq 1 &  \\
			k \leq z_{d} \leq k & \\
			0 \leq z_i - z_{i-1} \leq 1 & \text{ for all } i\in [d]
		\end{array}
	\]
	in the affine hyperplane $\{z\colon z_d = k\} \subset \R^d$. This is a description as an alcoved polytope.
\end{Exm}

\begin{Exm}[Order polytopes]
	Let $P = (\{p_1,p_2,\ldots,p_n\},\preceq)$ be a finite poset. The \emph{order polytope} $\mathcal{O}(P)\subset \R^d$ is the polytope defined by the inequalities
	\begin{align*}
		0 \leq x_i \leq 1 & \text{ for all }i\in [n] \\
		x_i\leq x_j & \text{ whenever } p_i \preceq p_j.
	\end{align*}
	The order polytope corresponding to an anti-chain is the $0/1$-cube. The order polytope of a chain is a simplex.
\end{Exm}

Since an alcoved polytope can only have a finite number of admissible facet normals, there exists a unique alcoved polytope of minimal volume among all alcoved polytopes containing the origin in the interior, which we call $Q_d$ throughout the paper. This polytope is obtained by taking the intersection of all facet-defining half-spaces that are defined by alcoved hyperplanes and contain the origin in the interior:
\begin{definition}\label{def:Qd}
Let $Q_d$ denote the alcoved polytope of minimal volume among all $d$-dimensional alcoved polytopes that have the origin in the interior:
\[
	Q_d\coloneqq
	\left\{
	(y_1, \ldots, y_d)
	\in \R^d\mid y_i-y_j\le 1 \text{ for } 0\le i,j\le d, y_0=0 \right\}.
\]
\end{definition}  
 
\begin{figure}[htbp]
\centering
\begin{subfigure}{.6\textwidth}
  \centering
  \includegraphics[width=.4\linewidth]{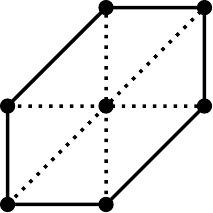}
  \caption{Alcoved triangulation of the hexagon $Q_2$}
  \label{fig:Q2}
\end{subfigure}%
\begin{subfigure}{.5\textwidth}
  \centering
  \includegraphics[width=.6\linewidth]{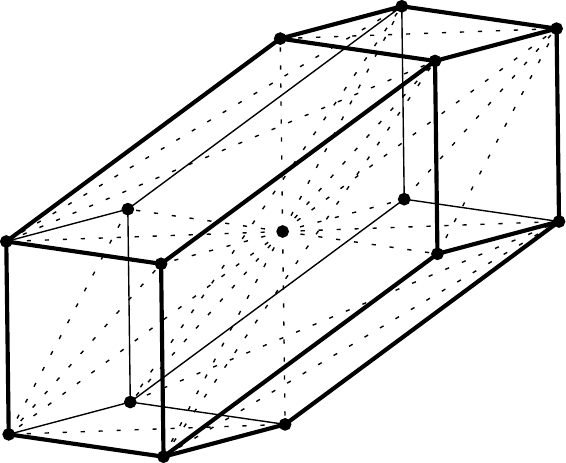}
  \caption{Alcoved triangulation of $Q_3$}
  \label{fig:Q3}
\end{subfigure}
\caption{Examples of the polytope $Q_d$}
\label{fig:Qd}
\end{figure}
 
The next proposition gives some idea about the combinatorial and geometric properties of the polytope $Q_d$.
\begin{proposition} \label{prop:qd}

\

\begin{enumerate}[(i)]
\item  $Q_d$ is centrally symmetric.  \label{prop:cs}
\item $Q_d$ has $2\binom{d+1}{2}$ facets.\label{prop:facetsQd}
\item $Q_d$ is the convex hull of the union of the cubes $[-1,0]^d$ and $[0,1]^d$.
This is a polytope that contains  $2^{d+1}-1$ lattice points. It has one interior lattice point and all other $2^{d+1}-2$ lattice points are vertices.
\label{convhull}
\item The polytope $Q_d$ is a projection of the $(d+1)$-dimensional unit cube $[0,1]^{d+1}$.
Moreover, $Q_d$ has the same $h^*$-vector as the unit cube $[0,1]^{d+1}$.\label{prop:cube}
\end{enumerate}
\end{proposition}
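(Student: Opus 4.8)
The plan is to base the entire proposition on the single linear projection $\pi\colon\R^{d+1}\to\R^d$, $\pi(x_0,x_1,\dots,x_d)=(x_1-x_0,\dots,x_d-x_0)$, deriving parts \eqref{convhull} and \eqref{prop:cube} from its properties and dispatching \eqref{prop:cs} and \eqref{prop:facetsQd} directly. For \eqref{prop:cs}, I would note that the defining inequalities $y_i-y_j\le 1$ (for $0\le i,j\le d$, $y_0=0$) are permuted among themselves by $y\mapsto -y$, which sends the ordered pair $(i,j)$ to $(j,i)$ and fixes $y_0=0$; hence $Q_d=-Q_d$ with center the origin. For \eqref{prop:facetsQd}, since the given $\mathcal H$-description of the full-dimensional $Q_d$ uses exactly the $d(d+1)=2\binom{d+1}{2}$ inequalities indexed by ordered pairs $(i,j)$ with $i\neq j$, it suffices to show the description is irredundant, as each surviving inequality then defines a distinct facet. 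For the pair $(i_0,j_0)$ I would produce a point in the relative interior of its face by setting $w_k=\tfrac12\big(\mathbf 1[k=i_0]-\mathbf 1[k=j_0]\big)$ for $k=0,\dots,d$ and taking $y_k=w_k-w_0$: every difference $y_a-y_b=w_a-w_b$ is at most $\tfrac12$ except $y_{i_0}-y_{j_0}=1$, so exactly the chosen inequality is tight.

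The heart of the matter is \eqref{prop:cube}. First I would verify $Q_d=\pi([0,1]^{d+1})$: the inclusion $\pi([0,1]^{d+1})\subseteq Q_d$ is immediate since $(x_i-x_0)-(x_j-x_0)=x_i-x_j\in[-1,1]$, and surjectivity holds because a preimage of $y$ needs $x_0\in[-\min_i y_i,\,1-\max_i y_i]$ (indices $i\in\{0,\dots,d\}$, $y_0=0$), an interval that is nonempty precisely when $\max_i y_i-\min_i y_i\le 1$, i.e.\ exactly when $y\in Q_d$. Since $\pi$ is linear, $tQ_d=\pi([0,t]^{d+1})$, and I would compute the Ehrhart polynomial by the bijection $y\mapsto\big(y_i-\min_k y_k\big)_{i=0}^{d}$ (with $y_0=0$) from $tQ_d\cap\Z^d$ onto the integer points of $\{0,\dots,t\}^{d+1}$ whose minimum coordinate is $0$; its inverse is $x\mapsto(x_i-x_0)_{i\in[d]}$. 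Counting the target set as all points minus those with all coordinates $\ge 1$ yields
\[
	L_{Q_d}(t)=(t+1)^{d+1}-t^{d+1}.
\]

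This identity gives the $h^*$-statement at once:
\[
	\Ehr_{Q_d}(z)=1+\sum_{t\ge 1}\big((t+1)^{d+1}-t^{d+1}\big)z^t=(1-z)\sum_{t\ge 0}(t+1)^{d+1}z^t=(1-z)\,\Ehr_{[0,1]^{d+1}}(z).
\]
Writing $\Ehr_{[0,1]^{d+1}}(z)=h^*_{[0,1]^{d+1}}(z)/(1-z)^{d+2}$ and cancelling one factor of $(1-z)$ produces $\Ehr_{Q_d}(z)=h^*_{[0,1]^{d+1}}(z)/(1-z)^{d+1}$, whence $h^*_{Q_d}=h^*_{[0,1]^{d+1}}$ (the Eulerian numbers). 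The main obstacle, and the point I expect to require the most care, is precisely this last translation: the two polytopes have different dimensions and hence Ehrhart series with different denominators $(1-z)^{d+1}$ and $(1-z)^{d+2}$, so the claim is an equality of numerator polynomials rather than of Ehrhart series, and getting the bookkeeping of the $(1-z)$ factor right is where a naive argument would go wrong.

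Finally, for \eqref{convhull} I would feed $\pi$ the vertices of the cube: $Q_d=\pi\big(\conv\{0,1\}^{d+1}\big)=\conv\big(\pi\{0,1\}^{d+1}\big)=\conv\big(\{0,1\}^d\cup\{-1,0\}^d\big)=\conv\big([0,1]^d\cup[-1,0]^d\big)$, using that $x_0=0$ maps a cube vertex into $\{0,1\}^d$ and $x_0=1$ maps it into $\{-1,0\}^d$. The lattice-point count $2^{d+1}-1$ is just $L_{Q_d}(1)$ from the Ehrhart formula; the origin is interior since every inequality is strict there; and these $2^{d+1}-1$ points are all the lattice points, so the vertices form a subset of them. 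To see each $v\neq 0$ is a vertex I would use \eqref{prop:facetsQd}: among the facets $y_i-y_j=1$ through a given $v\in\{0,1\}^d\setminus\{0\}$, setting $e_0:=0$ and taking $j=0$ recovers every $e_i$ with $v_i=1$, and then pairing such an $i$ with the remaining indices recovers the other $e_j$, so the facet normals span $\R^d$ and their common solution is the single point $v$; the points of $\{-1,0\}^d\setminus\{0\}$ then follow from the central symmetry of \eqref{prop:cs}.
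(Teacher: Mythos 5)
Your proposal is correct, but for the two substantive parts, (iii) and (iv), it takes a genuinely different route from the paper. The paper proves (iii) by direct analysis of the $\mathcal H$-description (all coordinates of points of $Q_d$ lie in $[-1,1]$, and no lattice point can have both a $+1$ and a $-1$ coordinate), and proves (iv) by showing that the projection carries the alcoved triangulation of $[0,1]^{d+1}$ simplex-by-simplex onto the alcoved triangulation of $Q_d$, transferring a shelling order, and then invoking Betke--McMullen (\Cref{BetkeMcMullen}) to pass from equality of $h$-vectors of the triangulations to equality of $h^*$-vectors. You instead establish $Q_d=\pi([0,1]^{d+1})$ first, compute the Ehrhart polynomial $L_{Q_d}(t)=(t+1)^{d+1}-t^{d+1}$ by an explicit bijection of $tQ_d\cap\Z^d$ with the points of $\{0,\dots,t\}^{d+1}$ having minimal coordinate $0$, and then read off both the lattice-point count in (iii) (as $L_{Q_d}(1)=2^{d+1}-1$, forcing $\{0,1\}^d\cup\{-1,0\}^d$ to be the full set of lattice points) and the $h^*$-equality in (iv) from the identity $\Ehr_{Q_d}(z)=(1-z)\Ehr_{[0,1]^{d+1}}(z)$; your explicit care with the mismatched denominators $(1-z)^{d+1}$ versus $(1-z)^{d+2}$ is exactly the right bookkeeping, and your vertex argument (facet normals through each nonzero lattice point determining it uniquely, plus central symmetry) is a valid replacement for the paper's convex-combination argument. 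What each approach buys: yours is more elementary and self-contained --- no triangulations, no shellability, no Betke--McMullen --- and it produces the explicit Ehrhart polynomial of $Q_d$ as a bonus; the paper's argument avoids generating-function manipulation, stays inside the triangulation framework that drives its main theorem, and yields the stronger geometric fact that the alcoved triangulations of the cube and of $Q_d$ correspond under the projection, which has independent interest. (Your treatments of (i) and (ii) are minor variants of the paper's: for (ii) you exhibit a relative-interior point of each facet, while the paper exhibits a point violating only the removed inequality; both establish irredundance.)
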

 
\begin{proof}
(\ref{prop:cs}) Central symmetry follows immediately from the hyperplane description. 

(\ref{prop:facetsQd})
To see that $Q_d$ has $2\binom{d+1}{2}$ facets, observe that all $2\binom{d+1}{2}$ hyperplanes in the hyperplane description of $Q_d$ are irredundant: The point with coordinates  
$x_i=1$, $x_j=-1$ and $x_k=0$ for all $k\in\{0, \ldots d\}\setminus\{i,j\}$ is not contained in $Q_d$, but it is contained in the polyhedron obtained by removing $\{x \in R^d \mid x_i-x_j\le 1\}$ from the hyperplane description of $Q_d$.

(\ref{convhull})
The cubes $[-1,0]^d$ and $[0,1]^d$ have $2^d$ lattice points each. The only lattice point in common is $\textbf{0}$, so together they have $2^{d+1}-1$ lattice points.

It follows readily from the hyperplane description of $Q_d$ that all vertices from the cubes $[-1,0]^d$ and $[0,1]^d$ are contained in $Q_d$.
So the convex hull of $[-1,0]^d$ and $[0,1]^d$ is contained in $Q_d$.
Since $Q_d$ is an alcoved polytope, and hence a lattice polytope,
in order to show that $Q_d$ is equal to the convex hull
it suffices to show that  the vertices of the cubes $[-1,0]^d$ and $[0,1]^d$ are the only lattice points contained in $Q_d$.
From the inequalities $x_i\le 1$ and $-x_i\le 1$ it follows that all coordinates of the points in $Q_d$ lie between $-1$ and $1$. Because of the inequality $x_i -x_j \le 1$, 
no point in $Q_d$ can contain both $x_i=1$ and $x_j=-1$ as coordinates. This shows that all lattice points in $Q_d$ are vertices from $[-1,0]^d$ or from $[0,1]^d$,
and hence that $Q_d$ is the convex hull of $[-1,0]^d$ and $[0,1]^d$.
The point $\textbf{0}$ is the unique interior lattice point of $Q_d$,
all other $2^{d+1}-2$ lattice points are vertices:
No point with coordinates in $\{0,1\}$ or $\{-1,0\}$ besides the point $\textbf{0}$ can be written as a convex combination of the other points.

(\ref{prop:cube}) $Q_d$ is the image of the unit cube $[0,1]^{d+1}$ under the projection $\varphi:\R^{d+1} \rightarrow \R^d$, $e_i\mapsto e_i$
for $i=1, \ldots, d$ and  $e_{d+1}\mapsto -e_1-\ldots -e_d$ with totally unimodular transformation matrix
\[\left(
\begin{array}{rc}
    \mbox{\Large $I_d$} &   \begin{matrix}  -1 \\ \vdots\\ -1 \end{matrix} 
\end{array}
\right).\]
The $(d+1)$-simplices in the alcoved triangulation of the $(d+1)$-cube 
are of the form $\conv \{0, e_{i_1}, e_{i_1}+e_{i_2}, \ldots, e_{i_1}+e_{i_2}+\ldots +e_{i_{d+1}}\}$.

They are mapped to $\conv \{0, e_{i_1}, e_{i_1}+e_{i_2}, \ldots,e_{i_1}+e_{i_2}+\ldots +e_{i_d}\}$, where $i_j\in\{0, \ldots, d\}$ with $e_0\coloneqq -e_1-\ldots -e_d$. These are the $d$-simplices of the alcoved triangulation of $Q_d$.
Intersections of $(d+1)$-simplices in the alcoved triangulation of the $(d+1)$-cube are mapped to the intersections of 
the corresponding $d$-simplices of $Q_d$. So if $\Delta_1, \ldots,  \Delta_{(d+1)!}$ is a shelling order of the $(d+1)$-simplices of the alcoved triangulation of $[0,1]^{d+1}$, then $\varphi(\Delta_1), \ldots,  \varphi(\Delta_{(d+1)!})$ is a shelling order of the $d$-simplices of the alcoved triangulation of $Q_d$.
The triangulations have therefore the same $h$-vectors.
Since alcoved triangulations are unimodular triangulations, it follows that $Q_d$ has  the same $h^*$-vector as $[0,1]^{d+1}$.
\end{proof}

\subsection{\texorpdfstring{$h^*$-vectors of lattice polytopes}{h{*}-vectors of lattice polytopes}}
We briefly introduce basic concepts from Ehrhart theory. 
For more details, see \cite{BeRo07}.

A \emph{lattice polytope} is a polytope in $\R^d$ whose vertices all have integer coordinates, i.e.~lie in $\Z^d$, which is the fixed lattice in $\R^d$ for us. For a lattice polytope $P\subset\R^d$ and a positive integer $t\in\N$, let $L_P(t)$ denote the number of lattice points in the $t$-th dilate of $P$, i.e.
\[L_P(t)\coloneqq |(tP)\cap\Z^d|.\]
The \emph{Ehrhart series} of $P$ is the corresponding generating function
\[\Ehr_P(z)\coloneqq 1 + \sum_{t\in \N_{>0}}L_P(t)z^t.\]
\begin{theorem}[Ehrhart {\cite[Thm.~2]{Ehr62}}]
\label{thm:ehrhart}
Let $P\subset\R^d$ be a lattice polytope. Then there exist complex numbers $h^*_i$ such that
\[\Ehr_P(z) = \frac{h_0^*(P)+h_1^*(P)z+ \ldots+h_d^*(P)z^d}{(1-z)^{d+1}}.\]
\end{theorem}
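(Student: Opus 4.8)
The plan is to follow the generating-function approach via the cone over $P$, as in \cite{BeRo07}. First I would homogenize: embed $P$ at height one in $\R^{d+1}$ by setting $P' = P \times \{1\}$, and form the cone $C = \cone(P')$ over it. Writing $x = (x_1, \ldots, x_{d+1})$ and calling $x_{d+1}$ the \emph{height}, the lattice points of $C$ at height $t$ are in bijection with $(tP) \cap \Z^d$, and the apex contributes the constant term $1$. Hence
\[
	\Ehr_P(z) = \sum_{x \in C \cap \Z^{d+1}} z^{x_{d+1}},
\]
and it suffices to show that this integer-point generating function has the stated form.

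Next I would triangulate $P$ into simplices whose vertices are lattice points of $P$ (for instance any triangulation using the vertices of $P$), which induces a decomposition of $C$ into simplicial cones over the lifted simplices. The essential technical step is to make this decomposition \emph{disjoint} on the level of lattice points: using a half-open decomposition, I would choose for each cone a consistent selection of its boundary facets so that $C$ becomes the disjoint union of the resulting half-open simplicial cones, and correspondingly the lattice-point generating functions add with no overcounting along shared faces.

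For a single half-open simplicial cone generated by lattice points $w_0, \ldots, w_d$, each sitting at height one, I would invoke the standard fact that the cone is the disjoint union of the integer translates, by the semigroup $\N w_0 + \cdots + \N w_d$, of the half-open fundamental parallelepiped $\Pi$. This yields
\[
	\sum_{x} z^{x_{d+1}} = \frac{\sum_{p \in \Pi \cap \Z^{d+1}} z^{p_{d+1}}}{(1-z)^{d+1}},
\]
the denominator being $(1-z)^{d+1}$ precisely because every generator $w_i$ has height one. Since every lattice point of $\Pi$ is a combination $\sum_i \lambda_i w_i$ with $0 \le \lambda_i < 1$, its height $\sum_i \lambda_i$ is an integer in $\{0, 1, \ldots, d\}$, so the numerator is a polynomial of degree at most $d$.

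Finally, summing these rational functions over the simplices of the triangulation produces $\Ehr_P(z)$ as a single fraction with common denominator $(1-z)^{d+1}$ and numerator a polynomial $h_0^* + h_1^* z + \cdots + h_d^* z^d$ of degree at most $d$, which is the asserted form. I expect the main obstacle to be the half-open decomposition: one must exhibit a globally consistent choice of which boundary facets each cone includes, so that the half-open simplicial cones genuinely partition $C$ together with its lattice points. This is what prevents double-counting along the shared faces of the triangulation and is the crux that justifies adding the per-simplex generating functions.
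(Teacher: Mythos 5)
The paper offers no proof of this statement to compare against: it is quoted as Ehrhart's theorem with a citation to \cite[Thm.~2]{Ehr62}, with \cite{BeRo07} given as background. Judged on its own, your proposal is the standard modern proof of exactly this statement---coning over $P$ at height one, triangulating without new vertices, passing to a half-open decomposition, and tiling each simplicial cone by translates of its fundamental parallelepiped---and its skeleton is sound. It in fact proves more than the statement asks: your numerator coefficients are visibly nonnegative integers (they count lattice points in parallelepipeds), which is Stanley's nonnegativity theorem, stated separately in the paper as \Cref{thm:Stanleynonneg}.

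Two points need tightening. First, you defer the existence of a globally consistent half-open decomposition; it has a short standard proof you could have included: fix a generic point $q$ in the interior of one maximal simplex of the triangulation, and from each maximal simplex remove exactly those facets whose affine span strictly separates the simplex from $q$; then every $x\in P$ lies in exactly one half-open simplex, namely the one $\Delta$ with $x+\varepsilon(q-x)\in\mathrm{int}(\Delta)$ for all small $\varepsilon>0$, and coning lifts this to a partition of $C$ and of its lattice points. Second, there is a genuine (though easily repaired) inaccuracy in the degree bound: for a genuinely half-open cone the fundamental domain is not $\{\sum_i \lambda_i w_i \colon 0\le \lambda_i<1\}$ but instead has $\lambda_i\in(0,1]$ for each removed facet, so your argument that heights lie in $\{0,1,\ldots,d\}$ does not literally apply to those cones. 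It is rescued by the observation that no half-open simplex in such a decomposition ever has all $d+1$ facets removed---a point cannot lie beyond every facet of a simplex, since barycentric coordinates sum to $1$---so at least one $\lambda_i$ stays strictly below $1$, the height of any lattice point of the adjusted parallelepiped is strictly less than $d+1$, and being an integer it is at most $d$. With these two lemmas spelled out, your argument is complete and is essentially the proof in the reference \cite{BeRo07} that the paper points to.
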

A corollary of this theorem is that $L_P(t)$ can be expressed as a rational polynomial of degree at most $d$ in the variable $t$, i.e. there exist rational numbers $q_0(P), q_1(P), \ldots, q_{d-1}(P),q_d(P)$ such that
\[L_P(t)= q_0(P) + q_1(P)t+ \ldots + q_{d-1}(P)t^{d-1}+q_d(P)t^d\]
for all $t\in \N_{>0}$.
This polynomial is called the \emph{Ehrhart polynomial} of $P$.
The vector $h^*(P)\coloneqq (h_0^*(P), h_1^*(P), \ldots, h_d^*(P))$ of coefficients of the numerator of the Ehrhart series in its expression as a rational function in \Cref{thm:ehrhart} is called the $h^*$-\emph{vector} ($h$-star-vector) of $P$.

Stanley later showed that the entries of the $h^*$-vector are not just complex numbers, see \cite[Thm.~2.1]{Sta80b}.
\begin{theorem}\label{thm:Stanleynonneg}
\hypertarget{Stanleynonneg}{The} entries of the $h^*$-vector of a lattice polytope are non-negative integers.
\end{theorem}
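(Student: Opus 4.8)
The plan is to reduce the statement to the case of a lattice simplex and there to interpret the $h^*$-coefficients as counts of lattice points, which are manifestly non-negative integers. The crucial device is the \emph{cone over $P$}: embed $P$ at height one in $\R^{d+1}$ by setting $\widehat{P} = \conv\{(v,1) : v \text{ a vertex of } P\}$ and consider $C_P = \cone(\widehat P) = \{\lambda x : \lambda \ge 0,\ x \in \widehat P\}$. A lattice point of $C_P$ whose last coordinate equals $t$ corresponds bijectively to a lattice point of $tP$, so, writing $\mathrm{ht}(x)$ for this last coordinate, the generating function $\sum_{x \in C_P \cap \Z^{d+1}} z^{\mathrm{ht}(x)}$ equals $\Ehr_P(z)$.

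First I would treat a lattice $d$-simplex $\Delta = \conv\{v_0,\ldots,v_d\}$. Here $C_\Delta$ is a simplicial cone generated by the $d+1$ lattice vectors $w_i = (v_i,1)$, and every lattice point of $C_\Delta$ is uniquely of the form $p + \sum_{i=0}^d n_i w_i$ with $n_i \in \N$ and $p$ in the half-open fundamental parallelepiped $\Pi = \{\sum_i \lambda_i w_i : 0 \le \lambda_i < 1\}$. Splitting the generating function accordingly gives
\[
	\Ehr_\Delta(z) = \Big(\sum_{p \in \Pi \cap \Z^{d+1}} z^{\mathrm{ht}(p)}\Big)\cdot \frac{1}{(1-z)^{d+1}},
\]
since each generator $w_i$ has height one and hence contributes a geometric factor $1/(1-z)$, while $\Pi$ is bounded with heights in $\{0,\ldots,d\}$. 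Comparing with \Cref{thm:ehrhart} identifies $h^*_j(\Delta)$ with the number of lattice points of $\Pi$ at height $j$; these are non-negative integers, so the claim holds for simplices.

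To pass to an arbitrary lattice polytope $P$, I would fix a triangulation of $P$ into lattice simplices using only the vertices of $P$ (for instance a pulling triangulation, which exists for every lattice polytope) and turn it into a \emph{half-open decomposition}: choosing a sufficiently generic point, each maximal simplex is made half-open by deleting the facets it does not face, so that $P$ is the disjoint union of the resulting half-open simplices. Each half-open piece still has a simplicial cone with a correspondingly half-open fundamental parallelepiped, and the computation above shows its contribution to the Ehrhart generating function is again of the form $(\sum_{p} z^{\mathrm{ht}(p)})/(1-z)^{d+1}$ with non-negative integer numerator coefficients of degree at most $d$. Summing over the pieces, which partition $P$ and hence partition every dilate $tP$, yields $\Ehr_P(z)$ with a numerator equal to the sum of these non-negative contributions, so by the uniqueness in \Cref{thm:ehrhart} we conclude $h^*_j(P) \in \N$.

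The main obstacle is the gluing step: I must ensure that the half-open pieces genuinely partition $P$, and compatibly all of $tP$, so that the generating functions add without over- or under-counting lattice points on shared faces, and that each half-open simplex still produces the denominator $(1-z)^{d+1}$ rather than a lower power. This is precisely what the genericity of the chosen point guarantees; making the combinatorics of which facets to delete precise — essentially Stanley's line-shelling argument, see \cite{BeRo07} — is the technical heart, whereas the simplex computation and the resulting non-negativity are then immediate.
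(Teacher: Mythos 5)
Your proof is correct. Note that the paper itself gives no argument for this statement: it is quoted as Stanley's theorem with a citation to \cite{Sta80b}, and the argument you sketch --- coning over the polytope, tiling a simplicial cone by lattice translates of the half-open fundamental parallelepiped so that $h^*_j$ counts parallelepiped lattice points at height $j$, then gluing via a half-open decomposition of a lattice triangulation --- is precisely the standard proof of that theorem. Stanley's original version in \cite{Sta80b} organizes the half-open pieces by a shelling of the triangulation rather than by visibility from a generic point, but the two devices are interchangeable, and the generic-point (``irrational decomposition'') variant you describe can be found in \cite{BeRo07}. One small point you use implicitly and should make explicit: no point can lie beyond all $d+1$ facet hyperplanes of a $d$-simplex (its barycentric coordinates sum to $1$), so every half-open simplex in your decomposition retains at least one closed facet. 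This is exactly what guarantees that each half-open parallelepiped has lattice points only at heights $0,\ldots,d$ (so every numerator has degree at most $d$, and the denominator stays $(1-z)^{d+1}$), and that exactly one piece --- the closed simplex containing the generic point --- contributes the constant term $1$, matching the uniqueness comparison with \Cref{thm:ehrhart}.
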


Some entries of the $h^*$-vector are known to have a nice combinatorial or geometric interpretation (see \cite[Section~1]{HT09}), for instance
\[
h^*_0(P) = 1,  h^*_1(P) =|P\cap \Z^d|-(d+ 1), \text{ and } h^*_d(P) =|\text{int}(P)\cap \Z^d|.
\]

\subsection{Reflexive and Gorenstein lattice polytopes}
The \emph{lattice distance} between  a hyperplane $H = \{x\in\R^d \colon m^tx = b\}$ for some vector $m\in\Z^d$ and $b\in \Z$ and a lattice point $p\in\Z^d$ is $0$ if $p\in H$. Otherwise it is $n+1$, where $n$ is
the number of parallel translates of $H$ through lattice points that are strictly between $p$ and $H$.
In particular, a hyperplane $H$ and a point $p\notin H$ have lattice distance $1$ if there are no lattice points that are strictly between $H$ and the hyperplane parallel to $H$ containing $p$.
The lattice distance between a facet $F$ and a lattice point is the lattice distance between the hyperplane $\aff(F)$ and $p$.

A lattice polytope with $0$ in its interior is called \emph{reflexive} if its polar is also a lattice polytope. Often, lattice polytopes are called reflexive if an appropriate translation by a lattice point is reflexive. This is then the same as saying that it has a unique interior lattice point and all facets have lattice distance $1$ from the interior lattice point. A lattice polytope $P\subset \R^n$ is \emph{Gorenstein} of index $k$ if $kP$, the $k$-th dilate of $P$, is a reflexive polytope for some $k\in \N_{>0}$.
\begin{theorem}\cite[Hibi]{Hi91}
A lattice $d$-polytope $P$ in $\R^d$ is reflexive (up to unimodular equivalence) if and only if 
its $h^*$-vector is symmetric, i.e.:
\[h^*_i=h^*_{d-i} \text{ for } 0\le i \le d.\] 
\end{theorem}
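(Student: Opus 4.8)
The plan is to route everything through Ehrhart--Macdonald reciprocity together with the observation that symmetry of the $h^*$-vector is equivalent, at the level of Ehrhart series, to the single lattice-point identity $L_{\text{int}(P)}(t) = L_P(t-1)$ for all $t\in\N$. Writing the $h^*$-polynomial as $h^*_P(z) = \sum_{i=0}^d h^*_i(P)\,z^i$, the symmetry $h^*_i = h^*_{d-i}$ is exactly the palindromicity $z^d h^*_P(1/z) = h^*_P(z)$. Reciprocity (see \cite{BeRo07}) states $\Ehr_{\text{int}(P)}(z) = (-1)^{d+1}\Ehr_P(1/z)$, and a direct substitution using $(1-1/z)^{d+1} = (-1)^{d+1}(1-z)^{d+1}/z^{d+1}$ turns the right-hand side into $z\,[z^d h^*_P(1/z)]/(1-z)^{d+1}$. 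Hence palindromicity of $h^*_P$ holds if and only if $\Ehr_{\text{int}(P)}(z) = z\,\Ehr_P(z)$, which by comparing coefficients (using $L_P(0)=1$) is precisely $L_{\text{int}(P)}(t) = L_P(t-1)$ for all $t$. So the whole theorem reduces to proving that this count identity is equivalent to reflexivity.

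For the forward direction, I would assume $P$ is reflexive and translate its unique interior lattice point to the origin, so that $P = \{x : \langle a_F, x\rangle \le 1\}$ with each facet normal $a_F$ a primitive integer vector. Because the $a_F$ are integral, a lattice point $x$ satisfies $\langle a_F, x\rangle < t$ if and only if $\langle a_F, x\rangle \le t-1$; intersecting over all facets gives the set equality $\text{int}(tP)\cap\Z^d = (t-1)P\cap\Z^d$, hence $L_{\text{int}(P)}(t) = L_P(t-1)$. By the equivalence above, $h^*_P$ is palindromic, i.e. the $h^*$-vector is symmetric.

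For the converse, suppose the $h^*$-vector is symmetric. Then $h_d^*(P) = h_0^*(P) = 1$, so $P$ has a unique interior lattice point, which I again translate to the origin; write $P = \{x : \langle a_F, x\rangle \le b_F\}$ with $a_F$ primitive and $b_F\ge 1$ integers, so that the lattice distance from the origin to $F$ is exactly $b_F$. The equivalence yields $L_{\text{int}(P)}(t) = L_P(t-1)$ for all $t$. Since $b_F\ge 1$ forces $(t-1)P \subseteq \text{int}(tP)$, the count identity upgrades to the set equality $\text{int}(tP)\cap\Z^d = (t-1)P\cap\Z^d$ for every $t\ge 1$. To conclude $b_F = 1$ for each facet, I would compare the maximum of $\langle a_F,\cdot\rangle$ over the two equal lattice sets: on $(t-1)P\cap\Z^d$ the maximum is $(t-1)b_F$, attained on the scaled facet $(t-1)F$, whereas on $\text{int}(tP)\cap\Z^d$ the maximum is $tb_F-1$, attained in the near-boundary slice $\{\langle a_F,\cdot\rangle = tb_F-1\}\cap tP$. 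Equality of the sets forces $tb_F - 1 = (t-1)b_F$, i.e. $b_F = 1$, so $P$ is reflexive.

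The hard part will be this last step of the converse: turning combinatorial palindromicity into the geometric statement that every facet has lattice distance $1$. Concretely, I must guarantee that for $t$ large both extremal heights are actually realized by lattice points, i.e. that the scaled facet $(t-1)F$ and the slice of $tP$ at height $tb_F-1$ contain lattice points lying strictly inside all the \emph{other} facets; this uses that $F$ is a $(d-1)$-dimensional lattice polytope and that these cross-sections grow with $t$. This is exactly where genuine lattice-point geometry is needed rather than mere polynomial identities, since the polynomial relation $(-1)^d L_P(-t) = L_P(t-1)$ by itself does not detect the individual facet distances $b_F$.
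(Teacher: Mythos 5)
The paper does not actually prove this statement; it quotes it from Hibi \cite{Hi91} as background, so your proposal can only be judged on its own merits rather than against an in-paper argument. Your reduction is sound: symmetry of the $h^*$-vector is equivalent to palindromicity of the $h^*$-polynomial, and via Ehrhart--Macdonald reciprocity this is equivalent to $L_{\mathrm{int}(P)}(t)=L_P(t-1)$ for all $t\ge 1$. Your forward direction is complete: with the interior lattice point at the origin and primitive integral normals, integrality turns $\langle a_F,x\rangle<t$ into $\langle a_F,x\rangle\le t-1$ for lattice points $x$, giving $\mathrm{int}(tP)\cap\Z^d=(t-1)P\cap\Z^d$. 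In the converse, the upgrade from equal counts to equal sets (using $b_F\ge 1$ to get $(t-1)P\subseteq\mathrm{int}(tP)$) is also correct.

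The genuine gap is the one you flag yourself, and as written it does leave the converse unproved: you compare the maxima of $\langle a_F,\cdot\rangle$ over the two equal lattice-point sets, and while the maximum $(t-1)b_F$ over $(t-1)P\cap\Z^d$ is certainly attained (at $(t-1)v$ for any vertex $v$ of $F$), you never establish that $\mathrm{int}(tP)$ contains a lattice point at height $tb_F-1$; without such a point, no contradiction with $b_F\ge 2$ follows, and this existence statement is exactly the content of the theorem's hard direction. The gap can be closed cleanly, with no covering-radius estimates: since $a_F$ is primitive, there is $u\in\Z^d$ with $\langle a_F,u\rangle=-1$; pick a rational point $y$ in the relative interior of $F$, say $sy\in\Z^d$ for some $s\in\N$, and note $\delta\coloneqq\min_{G\ne F}\bigl(b_G-\langle a_G,y\rangle\bigr)>0$, because any other facet $G$ containing $y$ would put $y$ in the proper face $F\cap G$ of $F$. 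Then for $t$ sufficiently large the lattice point $x_t=tsy+u$ satisfies $\langle a_F,x_t\rangle=tsb_F-1<tsb_F$ and, for every $G\ne F$, $\langle a_G,x_t\rangle\le ts(b_G-\delta)+\langle a_G,u\rangle<tsb_G$, so $x_t\in\mathrm{int}(tsP)\cap\Z^d$. The set equality at dilate $ts$ then forces $tsb_F-1\le(ts-1)b_F$, i.e. $b_F\le 1$, hence $b_F=1$ for every facet and $P$ is reflexive. With this insertion your argument becomes a complete and standard proof of Hibi's theorem; without it, the converse is incomplete precisely at the step you call ``the hard part.''
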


A lattice polytope $P$ is said to possess the \emph{integer-decomposition property} (IDP)
if for all $k\in \N_{>0}$, every lattice point in $kP$ can be written as a sum of $k$ lattice points of $P$.
Polytopes which possess the IDP are called \emph{IDP polytopes}, for short.

\subsection{Triangulations}
Next we look at some triangulations of lattice polytopes. A good reference for triangulations is \cite{DLRS10}.

A \emph{triangulation} of a point configuration $\mathcal{A}\subset\R^d$ is a simplicial complex with vertex set in $\mathcal{A}$ that covers $\conv(\mathcal{A})$.
By a triangulation of a lattice polytope $P$ we always mean a triangulation of the point configuration $P\cap \Z^d$.

A full-dimensional lattice simplex $S$ in $\R^d$ with vertices $v_0 , \ldots , v_n$ is called 
a \emph{unimodular simplex}
if the vectors $v_n -v_0, v_{n-1} -v_0 , \ldots , v_1 -v_0$ form a basis for $\Z^d$.
All unimodular $d$-dimensional lattice simplices have the same volume $\frac{1}{d!}$.
The volume of lattice polytopes is often normalized by the factor $d!$, so that a unimodular simplex is said to have \emph{normalized volume} $1$. A triangulation of a lattice polytope is a \emph{unimodular triangulation} if all its simplices are unimodular. The existence of a unimodular triangulation for a lattice polytope $P\subset \R^d$ implies that $P$ has the IDP, see \cite[Thm.~1.2.5]{HPPS17}.

A triangulation $\Delta(P)$  of a $d$-polytope $P$ is called a \emph{regular triangulation}  if the following conditions hold:
$P$ is the image $\pi(Q)$ of a polytope $Q\subset \R^{d+1}$ 
under the projection to the first $d$ coordinates:
\begin{align*}
\pi:  \R^{d+1} &\longrightarrow \R^d\\
 \begin{pmatrix}x\\ x_{d+1}\end{pmatrix}&\longmapsto x,
\end{align*}
and $\Delta(P)$ is the image of all \emph{lower faces} of $P$ under the projection $\pi$.
A \emph{lower face} $F$ is a face of $Q$ whose outer normal vector has a negative last coordinate.

Lattice polytopes which have a unimodular triangulation have particularly nice properties.
The following proposition is an example.
It allows us to reduce all questions about $h^*$-vectors of lattice polytopes with unimodular triangulations to questions about the $h$-vectors of the triangulation.
\begin{proposition}[Betke \& McMullen {\cite{BM85}}]\label{BetkeMcMullen}
For any lattice polytope $P$ which has a unimodular triangulation $\Delta(P)$,
the $h^*$-vector of $P$ is equal to the $h$-vector of $\Delta(P)$.
\end{proposition}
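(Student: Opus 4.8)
The plan is to compute the Ehrhart series of $P$ directly from the triangulation by sorting lattice points of each dilate $tP$ according to which \emph{open} face of $\Delta(P)$ they fall into, and then to recognize the resulting numerator as the $h$-polynomial of $\Delta(P)$. The starting observation is a per-simplex count: if $\sigma$ is a $j$-dimensional unimodular lattice simplex, then for every $t\ge 1$ the relative interior of the dilate $t\sigma$ contains exactly $\binom{t-1}{j}$ lattice points. This holds because a unimodular simplex is carried by an affine lattice automorphism onto the standard simplex $\conv\{0,e_1,\ldots,e_j\}$ in its affine span, and counting interior lattice points of the $t$-th dilate of the standard simplex is the elementary count of positive integer vectors of coordinate sum at most $t-1$, which equals $\binom{t-1}{j}$.

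Next I would use that the faces of $\Delta(P)$ are themselves unimodular simplices — a face of a unimodular simplex is unimodular with respect to the saturated lattice induced on its affine hull — and that the dilated complex $t\Delta(P)$ triangulates $tP$ with faces $t\sigma$. Since the relative interiors of the closed faces of $\Delta(P)$ partition $P$, the relative interiors of their dilates partition $tP$, so every lattice point of $tP$ lies in $\operatorname{relint}(t\sigma)$ for a unique face $\sigma$. Summing the per-simplex count over all faces gives, for $t\ge 1$,
\[
L_P(t) \;=\; \sum_{\sigma \in \Delta(P)} \binom{t-1}{\dim\sigma} \;=\; \sum_{j=0}^{d} f_j\,\binom{t-1}{j},
\]
where $f_j$ denotes the number of $j$-dimensional faces of $\Delta(P)$.

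From here it is bookkeeping with generating functions. Using $\sum_{t\ge 1}\binom{t-1}{j}z^t = z^{j+1}/(1-z)^{j+1}$ and absorbing the constant term $1 = f_{-1}$ (the empty face, with $f_{-1}=1$), I would obtain
\[
\Ehr_P(z) \;=\; \sum_{i=0}^{d+1} f_{i-1}\,\frac{z^i}{(1-z)^i},
\qquad\text{so}\qquad
(1-z)^{d+1}\,\Ehr_P(z) \;=\; \sum_{i=0}^{d+1} f_{i-1}\,z^i\,(1-z)^{d+1-i}.
\]
The right-hand side is exactly the standard expression of the $h$-polynomial $\sum_i h_i z^i$ of the $d$-dimensional simplicial complex $\Delta(P)$ in terms of its $f$-vector. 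Comparing with $\Ehr_P(z) = \big(\sum_i h^*_i(P)\,z^i\big)/(1-z)^{d+1}$ from \Cref{thm:ehrhart} and matching numerator coefficients yields $h^*_i(P)=h_i$ for all $i$ (with $h_{d+1}=0$, as forced by $P$ being a $d$-ball).

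The main obstacle is the per-simplex interior count together with the bookkeeping needed to apply it uniformly in all dimensions: one must verify that every face of a unimodular triangulation is unimodular for the saturated lattice on its affine hull, so that the $\binom{t-1}{j}$ formula is valid for faces of every dimension, and that the open dilated faces genuinely partition $tP\cap\Z^d$. Everything after that is routine manipulation of power series and the definition of the $h$-vector. I would emphasize that, because this argument never invokes a shelling, it applies to an \emph{arbitrary} unimodular triangulation and does not require $\Delta(P)$ to be regular.
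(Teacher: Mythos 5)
Your proof is correct, but there is nothing in the paper to compare it against: the paper quotes this proposition from Betke--McMullen \cite{BM85} as a black box and never proves it, so the relevant comparison is with the standard arguments in the literature. What you give is the classical direct-counting proof, and all the key steps check out: faces of unimodular simplices are unimodular for the saturated lattice in their affine hull; the relative interiors of the dilated faces $t\sigma$, $\sigma\in\Delta(P)$, partition $tP$, so each lattice point of $tP$ is counted exactly once; the interior count $\binom{t-1}{j}$ for the $t$-th dilate of a unimodular $j$-simplex is right (substituting $y_i=x_i-1$ in the standard simplex reduces it to counting nonnegative vectors with coordinate sum at most $t-1-j$); and the generating-function bookkeeping, using $\sum_{t\ge 1}\binom{t-1}{j}z^t=z^{j+1}/(1-z)^{j+1}$, identifies the Ehrhart numerator with $\sum_i f_{i-1}z^i(1-z)^{d+1-i}$, which is precisely the $f$-to-$h$ transformation for the $d$-dimensional complex $\Delta(P)$; uniqueness of the numerator in \Cref{thm:ehrhart} then forces $h^*_i(P)=h_i$ for $i\le d$ and $h_{d+1}=0$ (consistent with $\Delta(P)$ triangulating a ball). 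The closing remark you make is the genuine payoff of this route: it uses no shelling, hence no regularity or shellability hypothesis, so it establishes the proposition for arbitrary unimodular triangulations, exactly matching the stated generality. The main alternative proof computes both the Ehrhart series and the $h$-vector along a shelling (each newly attached simplex, glued along $r$ of its facets, contributes $z^{r}/(1-z)^{d+1}$ to the series), which is shorter but applies directly only to shellable, e.g.\ regular, triangulations; your argument trades a little binomial bookkeeping for that extra generality.
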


This observation is the basis of the proof of our main \Cref{thm:interiorlatticepoints}.

\subsection{\texorpdfstring{Stanley-Reisner Theory and $h^*$-vectors}{Stanley-Reisner Theory and h{*}-vectors}}
We explain some notions from Stanley--Reisner theory, which is an algebraic approach to simplicial complexes.
For more details see \cite{Sta96}, \cite{BrHe09}, or \cite{MS05}.

Let  $\Delta$ be an abstract simplicial complex with vertices $x_1, \ldots, x_n$. Let $K$ be a field and $K[X_1, \ldots, X_n]$ be the polynomial ring over $K$ where variable $X_i$ corresponds to vertex $x_i$.

The \emph{Stanley-Reisner  ideal}  of
$\Delta$  is  the  squarefree  monomial ideal $I_{\Delta}$ of $K[X_1, \ldots, X_n]$
generated by all the square-free monomials $X_{i_1}X_{i_2}\ldots X_{i_s}$
corresponding to 
 the non-faces $\{x_{i_1},x_{i_2}\ldots ,x_{i_s}\}$ of $\Delta$:
 \[
 X_{i_1}X_{i_2}\ldots X_{i_s} \in I_{\Delta} \text{ if and only if } 
 \{x_{i_1},x_{i_2}\ldots ,x_{i_s}\} \notin \Delta.
 \]
The \emph{face ring} (or \emph{Stanley-Reisner ring}) $K[\Delta]$ of  $\Delta$ 
is  the  quotient of $K[X_1, \ldots, X_n]$
 by  the  Stanley-Reisner  ideal,
\[
K[\Delta]\coloneqq K[X_1, \ldots, X_n]/ I_{\Delta}.
\]

The Stanley--Reisner correspondence allows us to express many combinatorial problems of simplicial complexes in terms of homological algebra. We need the notion of a \emph{Cohen--Macaulay ring}. In general, a commutative Noetherian local ring is called \emph{Cohen--Macaulay} if its depth is smaller than or equal to its Krull dimension. Since we only work with Stanley-Reisner rings here, we can simplify this definition to a characterization of Cohen--Macaulay rings for the case of certain quotient rings:

\begin{proposition}[Hironaka's criterion, see \cite{StanleyUBT}, Prop. 4.1]\label{Hironaka}
\hypertarget{label:Hironaka}{Let} $K$ be an infinite field and let $R\coloneqq K[X_0, \ldots, X_n]/ I$ be the quotient of $K[X_0, \ldots, X_n]$ by a homogeneous ideal $I$. Let $d$ denote the Krull dimension of $R$.
$R$ is a \emph{Cohen--Macaulay ring} if and only if there exist $d$ homogeneous, linear elements $\theta_1, \ldots, \theta_d$ from  $R$ and finitely many elements $\eta_1, \ldots, \eta_n$ from $R$ such that every $p \in R$ has a unique representation as
\[p=\sum^{n}_{i=1}\eta_ip_i(\theta_1, \ldots, \theta_d),\]
where the $p_i(\theta_1, \ldots, \theta_d)$ are elements in $K[\theta_1, \ldots, \theta_d]$.
\end{proposition}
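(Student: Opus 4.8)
The plan is to read the displayed condition as the statement that $R$ is a finitely generated \emph{free} module over the polynomial subring $A \coloneqq K[\theta_1, \ldots, \theta_d]$, with $\eta_1, \ldots, \eta_r$ a homogeneous $A$-basis: indeed, the existence together with the uniqueness of the expression $p = \sum_i \eta_i\, p_i(\theta_1, \ldots, \theta_d)$ is exactly the assertion that the $\eta_i$ form an $A$-module basis. Thus the proposition becomes the classical equivalence ``Cohen--Macaulay $\Leftrightarrow$ free over the polynomial ring spanned by a linear homogeneous system of parameters''. I would open with the preliminary step that such a linear h.s.o.p.\ exists: since $K$ is infinite and $\dim R = d$, graded Noether normalization (a prime-avoidance argument using that $R$ has only finitely many minimal primes) produces linear forms $\theta_1, \ldots, \theta_d$ that are algebraically independent and over which $R$ is module-finite, so that $A$ is an honest polynomial ring in $d$ variables and $\theta_1, \ldots, \theta_d$ generate an ideal primary to the irrelevant maximal ideal $\mathfrak{m}$.

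For the implication ($\Leftarrow$), suppose $\theta_1, \ldots, \theta_d$ and $\eta_1, \ldots, \eta_r$ as described exist. The uniqueness clause makes $R \cong \bigoplus_i A\eta_i$ a free $A$-module, and $R$ being module-finite over $A$ with $\dim R = d$ forces $\dim A = d$, so the $\theta_i$ are algebraically independent and hence a regular sequence in $A$. A free module inherits this regular sequence, so $\theta_1, \ldots, \theta_d$ is $R$-regular; as these elements lie in $\mathfrak{m}$ we get $\operatorname{depth} R \ge d$. Since $\operatorname{depth} R \le \dim R = d$ always holds, equality follows and $R$ is Cohen--Macaulay.

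For ($\Rightarrow$), assume $R$ is Cohen--Macaulay and take a linear h.s.o.p.\ $\theta_1, \ldots, \theta_d$ from the preliminary step. By the standard characterization of Cohen--Macaulayness, any h.s.o.p.\ is an $R$-regular sequence, so $\operatorname{depth}_A R = d$ when $R$ is viewed as a finitely generated graded module over the polynomial ring $A$. The Auslander--Buchsbaum formula then gives
\[
\operatorname{pd}_A R + \operatorname{depth}_A R = \operatorname{depth} A = d,
\]
whence $\operatorname{pd}_A R = 0$, i.e.\ $R$ is projective and therefore (being graded and finitely generated over the graded polynomial ring $A$, via graded Nakayama) a free $A$-module. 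Any homogeneous $A$-basis $\eta_1, \ldots, \eta_r$ then delivers the claimed unique representation.

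The technical heart, and the step I expect to need the most care, is the equivalence between freeness over $A$ and the regular-sequence condition in the ($\Rightarrow$) direction: this runs through the Auslander--Buchsbaum formula together with the graded version of ``projective implies free''. A secondary point worth handling explicitly is the dictionary between the graded invariants (depth measured against $\mathfrak{m}$) and the local definition of Cohen--Macaulayness quoted earlier; this is routine because all relevant invariants of a standard graded $K$-algebra may be computed after localizing at the irrelevant maximal ideal.
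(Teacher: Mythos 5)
Your proposal is correct, but note that the paper offers no proof of this proposition to compare against: it is quoted as Hironaka's criterion from Stanley \cite{StanleyUBT}, Prop.\ 4.1, and the paper's only addition is the remark immediately following the statement that the condition means $R$ is a free $K[\theta_1,\ldots,\theta_d]$-module with basis $(\eta_1,\ldots,\eta_n)$ --- which is precisely the reading you take as your starting point. Your argument is the standard proof of that classical equivalence, and both directions are sound: the preliminary graded Noether normalization (this is where the infinitude of $K$ is genuinely used, to obtain a \emph{linear} h.s.o.p.\ by prime avoidance, since a vector space over an infinite field is not a finite union of proper subspaces); the ($\Leftarrow$) direction, where you correctly flag that one must first deduce algebraic independence of the $\theta_i$ from $\dim A = \dim R = d$ under module-finiteness before transferring the regular sequence to the free module $R$; and the ($\Rightarrow$) direction via the graded Auslander--Buchsbaum formula together with graded ``projective implies free.'' Two small points deserve to be written out in a complete version: (i) in the ($\Rightarrow$) direction, module-finiteness of $R$ over $A$ (needed both for Auslander--Buchsbaum and for finiteness of the basis $\eta_1,\ldots,\eta_n$) follows from graded Nakayama, because $R/(\theta_1,\ldots,\theta_d)R$ has Krull dimension $0$ and is therefore a finite-dimensional $K$-vector space; (ii) in the ($\Leftarrow$) direction one should note that $(\theta_1,\ldots,\theta_d)R \neq R$, which holds because the $\theta_i$ have degree $1$ and hence lie in the irrelevant maximal ideal. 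Both are routine, and your closing remark about passing between graded depth and the local definition of Cohen--Macaulayness (by localizing at the irrelevant ideal) correctly identifies the remaining bookkeeping.
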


Equivalently, we can say that $R$ is a free $K[\theta_1, \ldots, \theta_d]$-module with basis $(\eta_1, \ldots, \eta_n)$.

The system $\Theta \coloneqq (\theta_1, \ldots, \theta_d)$ is called a \emph{linear system of parameters (l.s.o.p.)}
for $R$. If there exists a l.s.o.p. for a ring $R$, then any generic choice of $\theta_1, \ldots, \theta_d$ will be a l.s.o.p.
The term ``generic'' here refers to elements from a Zariski open subset of $R_1^d$, see~\cite{Ke99}. 

A simplicial complex $\Delta$ is called \emph{Cohen--Macaulay}
over a field $K$ if its Stanley-Reisner ring $K[\Delta]$ is a  Cohen--Macaulay ring.

Reisner's criterion gives a topological characterization of Cohen--Macaulay complexes in terms of their (reduced and simplicial) homology groups.
\begin{definition}
	Let $\Delta$ be a simplicial complex. The \emph{link} of a face $F\in \Delta$ is the set of all faces $G$ that are disjoint with $F$ but $F\cup G$ is a face of $\Delta$, i.e.
	\[
		\mathrm{link}(\Delta,F) = \{ G\in \Delta\colon F\cap G = \emptyset \text{ and } F\cup G\in\Delta\}.
	\]
	The \emph{star} for a face $F\in \Delta$ is the set of all faces $G$ that contain $F$, i.e.
	\[ \mathrm{star}(\Delta,F) = \{ G\in \Delta\colon F\subset G\}. \]
\end{definition}

\begin{proposition}[Reisner's criterion \cite{Reisner}]
A simplicial complex $\Delta$  is Cohen--Macaulay over a field $K$ if and only if for any face $F$ of $\Delta$, 
\[\dim_K(\overset{\sim}{H}_i(\mathrm{link}({\Delta},F);K))=0 \text{ for } i<\dim(\mathrm{link}({\Delta},F)).\]
that is, $\Delta$ is Cohen--Macaulay over $K$ if and only if the homology of each face's link vanishes below its top dimension.
\end{proposition}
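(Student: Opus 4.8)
The plan is to prove Reisner's criterion through local cohomology, reducing the Cohen--Macaulay condition to a vanishing statement that Hochster's formula then translates into the topology of links. First I would recall the cohomological characterization of Cohen--Macaulayness: writing $R = K[\Delta]$, with $\mathfrak{m} = (X_1, \ldots, X_n)$ the irrelevant maximal ideal and $d = \dim R$, the ring $R$ is Cohen--Macaulay if and only if the local cohomology modules $H^i_{\mathfrak{m}}(R)$ vanish for all $i < d$. This is the standard depth-via-local-cohomology theorem (depth equals the least $i$ with $H^i_{\mathfrak{m}}(R) \ne 0$, while $H^d_{\mathfrak{m}}(R) \ne 0$ always by Grothendieck non-vanishing), and it is equivalent to the free-module statement in Hironaka's criterion above. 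I would also record the elementary combinatorial fact that $d = \dim\Delta + 1$, so that for a face $F$ one has $\dim(\mathrm{link}(\Delta, F)) = d - |F| - 1$; this is the bookkeeping that makes the two vanishing ranges line up at the end.

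Next I would compute $H^i_{\mathfrak{m}}(R)$ explicitly. The tool is the $\mathbb{Z}^n$-graded extended \v{C}ech complex on the variables,
\[
\check{C}^{\bullet}: \quad 0 \to R \to \bigoplus_{i} R_{X_i} \to \bigoplus_{i<j} R_{X_iX_j} \to \cdots \to R_{X_1\cdots X_n} \to 0,
\]
whose cohomology is $H^{\bullet}_{\mathfrak{m}}(R)$. Everything here is multigraded, so I can work one multidegree $\mathbf{a} \in \mathbb{Z}^n$ at a time. Fixing $\mathbf{a}$ and letting $\sigma = \{j : a_j < 0\}$ be its negative support, an inspection of which monomials of each localization survive in degree $\mathbf{a}$ shows that the degree-$\mathbf{a}$ strand of $\check{C}^{\bullet}$ is zero unless $\sigma$ is a face of $\Delta$ and every nonzero coordinate of $\mathbf{a}$ is negative, and in that case the strand is canonically identified, up to a shift by $|\sigma|$, with the reduced simplicial cochain complex of $\mathrm{link}(\Delta, \sigma)$ over $K$. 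This yields Hochster's formula
\[
\dim_K H^i_{\mathfrak{m}}(R)_{\mathbf{a}} = \dim_K \widetilde{H}^{\,i - |\sigma| - 1}(\mathrm{link}(\Delta, \sigma); K)
\]
in the relevant multidegrees, and $0$ otherwise.

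With the formula in hand, the equivalence falls out by matching ranges. The condition $H^i_{\mathfrak{m}}(R) = 0$ for all $i < d$ holds if and only if, for every face $\sigma \in \Delta$, one has $\widetilde{H}^{\,i - |\sigma| - 1}(\mathrm{link}(\Delta, \sigma); K) = 0$ for all $i < d$. Substituting $j = i - |\sigma| - 1$ and using $d = \dim\Delta + 1$, the range $i < d$ becomes exactly $j < d - |\sigma| - 1 = \dim(\mathrm{link}(\Delta, \sigma))$; the case $\sigma = \emptyset$ recovers the condition on $\Delta$ itself. Since reduced homology and cohomology with field coefficients have equal dimensions by universal coefficients, the cohomological vanishing is the same as the homological vanishing stated in the proposition, settling both directions simultaneously.

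I expect the genuine obstacle to be the middle step: justifying the identification of each multigraded strand of the \v{C}ech complex with the cochain complex of the appropriate link. This requires a careful analysis of which Laurent monomials lie in a given localization $R_{X_W}$ (the condition $X^{\mathbf{b}} \in R_{X_W}$ forces $\mathrm{supp}(\mathbf{b}) \setminus W$ to be a face, while exponents on $W$ may be negative), together with checking that the \v{C}ech differential agrees with the simplicial coboundary with the correct signs. The reduction to local cohomology and the final range-matching are comparatively routine once this computation is in place.
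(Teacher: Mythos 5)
The paper itself offers no proof of this proposition---it is quoted with a citation to Reisner's article---so the only question is whether your argument stands on its own. Your overall route (depth via local cohomology, the $\mathbb{Z}^n$-graded \v{C}ech complex, Hochster's formula, then matching vanishing ranges) is the standard modern proof, and most of it is sound. But there is one genuine gap: the ``elementary combinatorial fact'' you record at the outset, $\dim(\mathrm{link}(\Delta,F)) = d - |F| - 1$, is exactly the statement that $\Delta$ is \emph{pure}, which is false for general simplicial complexes and is not a hypothesis of the proposition. What is true in general is only $\dim K[\Delta] = \dim\Delta + 1$ and the inequality $\dim(\mathrm{link}(\Delta,F)) \le d - |F| - 1$. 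For example, if $\Delta$ has facets $\{a,b,c\}$ and $\{c,d\}$, then $d=3$ while the link of $F=\{c,d\}$ is $\{\emptyset\}$, of dimension $-1 \ne 1$. The asymmetry is harmless in one direction: Cohen--Macaulayness gives, via Hochster, vanishing of $\widetilde{H}^{\,j}(\mathrm{link}(\Delta,\sigma);K)$ for all $j < d-|\sigma|-1$, a range containing $j < \dim(\mathrm{link}(\Delta,\sigma))$, so CM does imply the stated link condition. In the converse direction, however, your argument breaks: the hypothesis only provides vanishing for $j < \dim(\mathrm{link}(\Delta,\sigma))$, while Hochster demands it in the possibly larger range $j < d - |\sigma| - 1$. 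The discrepancy is real, not cosmetic: for a non-pure $\Delta$, a facet $\sigma$ with $|\sigma| < d$ has $\mathrm{link}(\Delta,\sigma) = \{\emptyset\}$, whose reduced cohomology is $K \ne 0$ in degree $-1 < d - |\sigma| - 1$, so the Hochster condition fails (correctly, since non-pure complexes are never Cohen--Macaulay), yet your hypothesis range $j < -1$ is empty and detects nothing.

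The missing ingredient is the lemma that the stated link condition forces $\Delta$ to be pure; once purity is known, $\dim(\mathrm{link}(\Delta,\sigma)) = d - |\sigma| - 1$ holds for every face and your range matching goes through verbatim. This lemma is a separate induction: the link condition passes to links (since $\mathrm{link}(\mathrm{link}(\Delta,\sigma),\tau) = \mathrm{link}(\Delta,\sigma\cup\tau)$), so by induction on dimension every vertex link is pure; the case $\sigma = \emptyset$ gives $\widetilde{H}_0(\Delta)=0$, hence connectedness when $\dim\Delta \ge 1$; and then any two facets can be joined by a chain of facets consecutively sharing a vertex, along which purity of the vertex links forces all facets to have equal cardinality. (The forward direction can alternatively invoke that Cohen--Macaulay rings are unmixed, so CM already implies purity there.) A second, much smaller imprecision: the degree-$\mathbf{a}$ strand of the \v{C}ech complex is not \emph{zero} when $\mathbf{a}$ has a positive coordinate---it is nonzero but acyclic (a cone)---so the correct statement is that its cohomology vanishes; since you flagged this computation as the step needing care, that is easily repaired, but the purity issue is a genuine hole in the proof as proposed.
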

In particular, this implies that pure shellable simplicial complexes are Cohen--Macaulay, see \cite[Theorem~13.15]{MS05}. 

If we now have a pure shellable simplicial complex $\Delta$ of dimension $d-1$, then its face ring $K[\Delta]$ has Krull dimension $d$. According to \hyperlink{label:Hironaka}{Hironaka's criterion} we can choose a l.s.o.p. $\Theta = (\theta_1, \ldots, \theta_d)$ and consider the quotient ring $K[\Delta]/\Theta$. The total degree  grading of $K[X_1,\ldots,X_n]$ induces a grading on the quotient so that we can write the face ring as the direct sum 
\[
K[\Delta]/\Theta = (K[\Delta]/\Theta)_0 \oplus (K[\Delta]/\Theta)_1 \oplus \ldots \oplus (K[\Delta]/\Theta)_d,
\]
where $\dim_K (K[\Delta]/\Theta)_i<\infty$ for $i= 0, \ldots, d$.

\begin{proposition}[{see Stanley \cite[Sect. 2.2]{Sta96}}]
	Let $\Delta$ be defined as above with
	$K[\Delta]/\Theta = (K[\Delta]/\Theta)_0 \oplus (K[\Delta]/\Theta)_1 \oplus \ldots \oplus (K[\Delta]/\Theta)_d.$
	Let $h(\Delta) = (h_0, \ldots, h_d)$ be the $h$-vector of $\Delta$.
	Then 
\[ 
\dim_K (K[\Delta]/\Theta)_i = h_i
\]
for $i= 0, \ldots, d$.
\end{proposition}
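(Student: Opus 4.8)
The plan is to pass to Hilbert series and to exploit the freeness of $K[\Delta]$ over $K[\Theta]$ supplied by the Cohen--Macaulay hypothesis. First I would compute the Hilbert series of the face ring directly from its standard monomial basis: the monomials of $K[X_1,\ldots,X_n]$ whose support is a face of $\Delta$ form a $K$-basis of $K[\Delta]$. Grouping these by their support $\sigma\in\Delta$ and counting, in each degree $m$, the monomials supported exactly on a face with $|\sigma|$ vertices (there are $\binom{m-1}{|\sigma|-1}$ of them) yields
\[
\mathrm{Hilb}(K[\Delta];t) = \sum_{\sigma\in\Delta}\left(\frac{t}{1-t}\right)^{|\sigma|} = \sum_{i=0}^{d} f_{i-1}\,\frac{t^i}{(1-t)^i},
\]
where $f_{i-1}$ counts the $(i-1)$-dimensional faces. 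Clearing denominators and invoking the combinatorial identity $\sum_i h_i t^i = \sum_i f_{i-1}\,t^i(1-t)^{d-i}$ that defines the $h$-vector from the $f$-vector, this rewrites as $\mathrm{Hilb}(K[\Delta];t) = \bigl(\sum_{i=0}^d h_i t^i\bigr)/(1-t)^d$.

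Next I would feed in the Cohen--Macaulay hypothesis through \Cref{Hironaka}: since $\Delta$ is pure shellable, hence Cohen--Macaulay over $K$, the face ring $K[\Delta]$ is a free $K[\Theta]$-module on a homogeneous basis $\eta_1,\ldots,\eta_n$. Freeness lets me factor the Hilbert series, using that $K[\Theta]=K[\theta_1,\ldots,\theta_d]$ is a polynomial ring on $d$ linear forms and so has Hilbert series $(1-t)^{-d}$:
\[
\mathrm{Hilb}(K[\Delta];t) = \mathrm{Hilb}(K[\Theta];t)\cdot\sum_{j=1}^{n} t^{\deg\eta_j} = \frac{1}{(1-t)^d}\sum_{j=1}^{n} t^{\deg\eta_j}.
\]
Comparing with the expression from the first paragraph and cancelling $(1-t)^{-d}$ gives the identity $\sum_{j} t^{\deg\eta_j} = \sum_{i=0}^d h_i t^i$.

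Finally I would read off the graded dimensions of the Artinian reduction. Because $K[\Delta]$ is free over $K[\Theta]$ on the $\eta_j$, reducing modulo $\Theta$ collapses each free summand $K[\Theta]\cdot\eta_j$ to $K\cdot\bar\eta_j$ in degree $\deg\eta_j$, so the images $\bar\eta_1,\ldots,\bar\eta_n$ form a grading-respecting $K$-basis of $K[\Delta]/\Theta$. Hence $\dim_K(K[\Delta]/\Theta)_i$ equals the number of $\eta_j$ with $\deg\eta_j=i$, which by the coefficient comparison above is exactly $h_i$. (Alternatively, one could bypass freeness and use that Cohen--Macaulayness makes $\Theta$ a regular sequence: multiplication by each $\theta_k$ gives a short exact sequence that multiplies the Hilbert series by $(1-t)$, so after $d$ steps $\mathrm{Hilb}(K[\Delta]/\Theta;t)=(1-t)^d\,\mathrm{Hilb}(K[\Delta];t)=\sum_i h_i t^i$.) The main subtlety is not any single step but making the two independent Hilbert series computations line up — the purely combinatorial $f$-to-$h$ identity on one side and the algebraic freeness from Cohen--Macaulayness on the other; everything rests on $\Theta$ being a genuine l.s.o.p.\ and on $K[\Delta]$ being Cohen--Macaulay, both already secured by the shellability hypothesis via Reisner's criterion.
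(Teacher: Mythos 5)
Your proof is correct and follows exactly the standard argument from the source the paper cites (Stanley's \emph{Combinatorics and Commutative Algebra}): the paper itself gives no proof of this proposition, deferring entirely to that reference, and your two-step route — computing $\mathrm{Hilb}(K[\Delta];t)=\bigl(\sum_i h_i t^i\bigr)/(1-t)^d$ from the monomial basis and the $f$-to-$h$ identity, then factoring the Hilbert series through the freeness of $K[\Delta]$ over $K[\Theta]$ supplied by Hironaka's criterion — is precisely that proof. The only cosmetic caveat is that the basis $\eta_1,\ldots,\eta_n$ must be chosen homogeneous (Hironaka's criterion as stated does not say this explicitly, but in the graded setting one can always lift a homogeneous $K$-basis of $K[\Delta]/\Theta$), after which your conclusion, and equally your alternative regular-sequence argument, goes through without change.
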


Let $\Delta$ be a $(d-1)$-dimensional Cohen--Macaulay complex with face ring 
$K[X_1 , \ldots , X_n ]/ I_{\Delta}= K[\Delta]$ and with l.s.o.p. $\Theta$.
An element $\omega \in K[X_1 , \ldots , X_n ]$ of degree $1$ is called a \emph{strong Lefschetz element} for $K[\Delta]/\Theta$ if the multiplication by $\omega^{d-2i}$,
\begin{align*}
\omega^{d-2i} : (K[\Delta]/\Theta)_i &\longrightarrow (K[\Delta]/\Theta)_{d-i}\\
m&\longmapsto \omega^{d-2i}m,
\end{align*}
is a bijection for $0 \le i \le \floor{\frac{d}{2}}$.

Following the notation from \cite{KN09}, we call
$\omega$ an \emph{almost strong Lefschetz element} for $K[\Delta]/\Theta$
if the multiplication 
by $\omega^{d-1-2i}$,
\begin{align*}
\omega^{d-1-2i} : (K[\Delta]/\Theta)_i &\longrightarrow (K[\Delta]/\Theta)_{d-1-i}\\
m&\longmapsto \omega^{d-1-2i}m,
\end{align*}
is an injection for $0 \le i \le \floor{\frac{d-1}{2}}$. A strong Lefschetz element is also an almost strong Lefschetz element because the multiplication by $\omega^{d-2i}$ is the composition of the multiplication with $\omega$, $d-2i$ times: if the resulting map is bijective, then the first map in the sequence has to be injective (and the last surjective).

A Cohen--Macaulay complex $\Delta$ is said to possess the \emph{strong Lefschetz property} if there exists a strong Lefschetz element for $K[\Delta]/\Theta$.
It follows from basic linear algebra that the existence of strong Lefschetz elements for Cohen--Macaulay complexes implies symmetry and unimodality of their $h$-vectors.
In particular the $h$-vectors of simplicial polytopes and simplicial spheres are unimodal by the following results. Stanley's result proves one direction in the $g$-theorem for simplicial polytopes.

\begin{theorem}[{Stanley \cite{Sta80}}]
	Boundary complexes of simplicial polytopes possess the strong Lefschetz property.
\end{theorem}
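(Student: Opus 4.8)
The plan is to realize the Artinian reduction $\Q[\Delta]/\Theta$ of the face ring as the rational cohomology ring of a projective toric variety and then to read off the strong Lefschetz property directly from the hard Lefschetz theorem. Throughout, let $P\subset\R^d$ be a simplicial $d$-polytope with boundary complex $\Delta$, translated so that the origin lies in its interior, and let $v_1,\dots,v_n$ be its vertices. First I would form the \emph{face fan} $\Sigma$ of $P$, the complete fan whose cones are the cones $\cone(F)$ over the faces $F$ of $P$; equivalently, $\Sigma$ is the normal fan of the polar dual $P^\circ$. Since $P$ is simplicial, $\Sigma$ is a complete simplicial fan with ray generators $v_1,\dots,v_n$, and the incidence data recording which rays span a cone of $\Sigma$ is exactly the complex $\Delta$. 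The associated toric variety $X_\Sigma$ is then a projective, $\Q$-factorial variety of complex dimension $d$ with at worst finite abelian quotient singularities.

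Next I would invoke the theorem of Jurkiewicz and Danilov: the rational cohomology of $X_\Sigma$ vanishes in odd degrees, and in even degrees it is the Artinian reduction of the face ring,
\[
H^{2i}(X_\Sigma;\Q)\;\cong\;(\Q[\Delta]/\Theta)_i,
\]
where $\Theta=(\theta_1,\dots,\theta_d)$ is the linear system of parameters coming from the ray generators, $\theta_j=\sum_{k=1}^n\scp{e_j,v_k}X_k$. In particular $\dim_\Q H^{2i}(X_\Sigma;\Q)=h_i(\Delta)$, in agreement with the identification of the graded dimensions of $\Q[\Delta]/\Theta$ with the $h$-vector recorded above. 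Because $\Sigma$ is the normal fan of a polytope, $X_\Sigma$ is projective and carries an ample $\Q$-divisor class, that is, a K\"ahler class $\omega\in H^2(X_\Sigma;\Q)$; under the isomorphism above $\omega$ is represented by a degree-one element of $\Q[\Delta]/\Theta$.

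The hard Lefschetz theorem for the projective orbifold $X_\Sigma$ then asserts that cup product with $\omega^{d-2i}$,
\[
\omega^{d-2i}\colon H^{2i}(X_\Sigma;\Q)\longrightarrow H^{2(d-i)}(X_\Sigma;\Q),
\]
is an isomorphism for $0\le i\le\floor{d/2}$. Transporting this across the Jurkiewicz--Danilov isomorphism shows that multiplication by $\omega^{d-2i}$ is a bijection $(\Q[\Delta]/\Theta)_i\to(\Q[\Delta]/\Theta)_{d-i}$ for each such $i$; hence $\omega$ is a strong Lefschetz element and the boundary complex $\Delta$ possesses the strong Lefschetz property, as claimed.

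The hard part will be hard Lefschetz itself in this singular setting, since $X_\Sigma$ is generally not smooth and the classical K\"ahler package does not apply verbatim. The resolution is that a $\Q$-factorial projective toric variety is a rational homology manifold, so rationally it behaves like a smooth projective variety: its rational cohomology carries a pure Hodge structure and satisfies Poincar\'e duality and hard Lefschetz. Establishing this --- for example through a global quotient (orbifold) presentation together with an averaging argument, or by identifying ordinary rational cohomology with intersection cohomology and applying hard Lefschetz there --- is the technical core of the proof; the remaining ingredients are the standard dictionary among polytopes, fans, and face rings.
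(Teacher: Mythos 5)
Your proposal reconstructs exactly the argument behind the paper's citation: the paper gives no proof of this theorem but attributes it to Stanley \cite{Sta80}, and Stanley's original proof is precisely the route you describe --- face fan of the simplicial polytope, the Jurkiewicz--Danilov identification of $H^{2i}(X_\Sigma;\Q)$ with $(\Q[\Delta]/\Theta)_i$ where $\Theta$ comes from the ray generators, an ample class supplied by projectivity, and hard Lefschetz for the resulting $\Q$-factorial projective toric variety (justified in the singular setting via the $V$-manifold/orbifold Hodge theory of Steenbrink, or in modern language via intersection cohomology, as you indicate).

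There is, however, one concrete gap: a toric variety $X_\Sigma$ exists only when the fan $\Sigma$ is \emph{rational}, and the vertices of an arbitrary simplicial polytope $P\subset\R^d$ need not have rational coordinates, so the face fan $\{\cone(F)\colon F\in\Delta\}$ you construct may not be a rational fan and the argument does not get off the ground. The missing step --- which is explicit in Stanley's paper --- is a perturbation argument: because $P$ is simplicial, every sufficiently small perturbation of its vertices leaves the combinatorial type of the boundary complex unchanged, so $P$ may be replaced by a combinatorially equivalent polytope with rational (after scaling, integral) vertices. Since the face ring $\Q[\Delta]$, the $h$-vector, and the strong Lefschetz property depend only on the abstract complex $\Delta$, this reduction is harmless, but it cannot be waved away: simpliciality is exactly what makes it work, as there exist combinatorial types of non-simplicial $4$-polytopes admitting no rational realization at all. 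So simpliciality enters your proof twice --- once to make the fan simplicial (which you use) and once to make it rational (which you omit).
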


\section{Alcoved polytopes with interior lattice points} 
\subsection{The main theorem}
In this section, we give a proof of our main theorem.
\begin{theorem} \label{thm:interiorlatticepoints}
Let $P$ be a $d$-dimensional alcoved polytope with interior lattice points
such that every facet of $P$ has lattice distance $1$ to the set of interior lattice points.
Then its $h^*$-vector is unimodal. 
\end{theorem}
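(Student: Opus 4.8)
The plan is to deduce the theorem from \Cref{thm:UTunimodal}. Since $P$ is alcoved it carries the alcoved triangulation $\Delta$, which is regular and unimodular, so by \Cref{BetkeMcMullen} we have $h^*(P)=h(\Delta)$ and it suffices to verify the single hypothesis of \Cref{thm:UTunimodal}, namely that the simplicial complex induced by $\Delta$ on the boundary lattice points $(\partial P)\cap\Z^d$ is a triangulation of $\partial P$. Half of the resulting inequalities — the descending tail $h^*_{\floor{\frac{d+1}{2}}}\ge\dots\ge h^*_d$ — is already supplied by the Hibi--Stanley inequalities of Athanasiadis for polytopes with a regular unimodular triangulation; the content of \Cref{thm:UTunimodal} is that the boundary triangulation being an honest $(d-1)$-sphere forces the ascending half as well. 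Thus the entire burden of the proof is the geometric claim that the two hypotheses on $P$ — existence of interior lattice points, and lattice distance $1$ from every facet to the interior lattice points — guarantee this boundary condition.

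First I would reduce the boundary condition to a statement about faces. The induced complex on $W=(\partial P)\cap\Z^d$ consists of all faces of $\Delta$ whose vertices lie in $W$; because $\Delta$ is unimodular, every boundary lattice point is already a vertex of $\Delta$, so the induced complex triangulates $\partial P$ exactly when every face of $\Delta$ with all vertices in $W$ is contained in $\partial P$. A convexity argument makes this concrete: if a simplex $\sigma$ satisfies $\sigma\subseteq\partial P$, pick a point in $\mathrm{relint}(\sigma)$; it lies on some facet $F=\{a_F\cdot x=b_F\}$, and since it is a strictly positive combination of the vertices of $\sigma$, all of which satisfy $a_F\cdot v\le b_F$, every vertex must satisfy $a_F\cdot v=b_F$, i.e.\ $\sigma\subseteq F$. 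Hence it is enough to rule out \emph{interior-crossing} faces: simplices of $\Delta$ whose vertices all lie on $\partial P$ but whose relative interior meets $\mathrm{int}(P)$; equivalently, to show that the boundary vertices of every alcove lie on a common facet.

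The tool for this is the combinatorics of alcoves together with the distance-$1$ hypothesis. Every face of $\Delta$ sits inside an alcove $\tau$, a unimodular simplex confined between two consecutive parallel alcove hyperplanes in each alcove direction; consequently, for every admissible normal $a\in\{e_i\}\cup\{e_i-e_j\}$ the values $a\cdot w$ taken over the vertices $w$ of $\tau$ span an interval $[m,m+1]$ of length at most one. For a facet $F$ of $P$ one has $a_F\cdot w\le b_F$ on $\tau$, so a vertex lies on $F$ precisely when it attains the maximal value $b_F$, while any vertex not on $F$ lies at lattice distance exactly $1$ from $F$. I would then use the distance-$1$ hypothesis — which fixes, for each facet $F$, that the innermost lattice layer $a_F\cdot x=b_F-1$ is reached by interior lattice points — to show that the boundary-touching pattern of each alcove is ``monotone'': the vertices of $\tau$ lying on $\partial P$ cannot be distributed among incompatible facets and therefore share a common facet. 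The existence of interior lattice points enters to guarantee $\mathrm{int}(P)\ne\emptyset$, so that the induced complex on $W$ is a genuine $(d-1)$-sphere and \Cref{thm:UTunimodal} applies.

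The main obstacle is exactly this last step: converting the two-value property of alcoves and the distance-$1$ hypothesis into a proof that every alcove meets the boundary in the faces of a single facet. I expect to carry it out by a case analysis on the set of facets an alcove touches, using the staircase description of the vertices of an alcove of type $\mathcal A$ to control the quantities $a_F\cdot w$ simultaneously for all relevant $F$. This is also the point at which the hypothesis is sharp: if some facet is only at lattice distance $2$ from the interior lattice points, there is room for a simplex to jump across $\mathrm{int}(P)$ between two facets, producing an interior-crossing face and breaking the reduction to \Cref{thm:UTunimodal}. This is the phenomenon recorded in \Cref{rem:proofstrategy}, consistent with the bound $\dim(P)-1$ on the largest achievable lattice distance for general alcoved polytopes.
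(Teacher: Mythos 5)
There is a genuine gap, and it is fatal to the strategy as stated: you propose to verify the hypothesis of \Cref{thm:UTunimodal} for the \emph{alcoved} triangulation of $P$, but that hypothesis simply fails for the alcoved triangulation, even under the distance-$1$ assumption. The paper's own example shows this. Take $P=[-1,1]^2$: it is alcoved, has the unique interior lattice point $(0,0)$, and all four facets are at lattice distance $1$ from it, so it satisfies every hypothesis of the theorem. Yet the alcoved triangulation contains the edge $\conv\{(-1,0),(0,1)\}$ (indeed the whole alcove $\conv\{(-1,0),(-1,1),(0,1)\}$), whose vertices all lie on $\partial P$ but which is not contained in any facet. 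Hence the induced complex on $(\partial P)\cap\Z^2$ is not a triangulation of $\partial P$, and the claim you defer to the final case analysis --- that the boundary vertices of every alcove lie on a common facet --- is false. The distance-$1$ hypothesis does not control this phenomenon: alcoves can ``cut the corner'' between two facets meeting at a lower-dimensional face, and this happens in every dimension $\geq 2$. (A side remark: your division of labor is also slightly off --- \Cref{thm:UTunimodal} proves both the ascending and the descending half itself; the Athanasiadis inequalities are not needed as a separate ingredient.)

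The paper avoids this by not using the alcoved triangulation at all. Its \Cref{lem:triangulation} constructs a \emph{different} regular unimodular triangulation: first take the height function $b$ equal to $0$ on interior lattice points and $1$ on boundary lattice points, whose induced subdivision consists of $\conv(\mathrm{int}(P)\cap\Z^d)$ together with Cayley polytopes over the facets of $P$; then refine by $b+\varepsilon a$, where $a$ is the alcoved lifting function and $\varepsilon>0$ is small. The boundary condition of \Cref{thm:UTunimodal} then holds by construction, because every cell of the coarse subdivision that meets the boundary is a Cayley cell over a single facet. This is also where the distance-$1$ hypothesis actually enters: it guarantees that each Cayley cell has lattice height $1$, which is what makes the refinement unimodular, by \cite[Lemma 4.15(2)]{HPPS17}. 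Compare \Cref{rem:proofstrategy}: if some facet has lattice distance at least $2$ to the interior lattice points, then \emph{no} unimodular triangulation compatible with the boundary exists, so the failure you anticipate is even stronger than ``room for a simplex to jump across.'' To repair your proof you must replace the alcoved triangulation by such a construction; the rest of your reduction (the boundary condition is equivalent to every face of $\Delta$ with all vertices on $\partial P$ lying in a common facet, via the convexity argument) is correct, as is the appeal to \Cref{BetkeMcMullen} and \Cref{thm:UTunimodal}.
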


Before we move on to the proof, we discuss some consequences and computational experiments.
First, for even dimensions $d$, \Cref{thm:interiorlatticepoints} combined with the result by Hibi and Stanley discussed in the introduction (\cite[Theorem 1.3]{Ath04}) tells us that the peak always occurs at the middle entry $h^*_{\frac d 2 }(P)$.
For odd dimensions $d$, the two theorems only tell us that the peak occurs either at $h^*_{\frac{d-1}{2}}(P)$ or at $h^*_{\frac{d+1}{2}}(P)$.
In \Cref{sec:computations} we describe the algorithms that we used to generate random alcoved polytopes and calculate their $h^*$-vectors.
We tested around 20.000 alcoved polytopes of dimension up to 16. All $h^*$-vectors were unimodal. We tested around 1.000 alcoved polytopes with interior lattice points. In dimension greater than $5$, the $h^*$-vectors of all of these polytopes had their peak at $h^*_{\ceil{\frac{d-1}{2}}}(P)$. Interestingly, in dimension $5$, the peak also occured at $h^*_{\frac{d+1}{2}}(P)$.
See \Cref{alg:examples} for some examples of $h^*$-vectors of some randomly generated alcoved polytopes.

The first step towards the proof of our main theorem is the following statement.
\begin{theorem}[Adiprasito \& Steinmeyer \cite{Johanna}]\label{thm:UTunimodal}
	Let $P$ be a $d$-dimensional lattice polytope in $\R^d$ that admits a unimodular triangulation $\Delta$ such that the induced simplicial complex on $(\partial P)\cap \Z^d$ is a triangulation of $\partial P$. Then the $h^*$-vector of $P$ is unimodal.
\end{theorem}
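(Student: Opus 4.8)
The plan is to work entirely with the $h$-vector of the triangulation. By \Cref{BetkeMcMullen} we have $h^*(P)=h(\Delta)$, so it suffices to prove that the $h$-vector of the $d$-dimensional simplicial ball $\Delta$ is unimodal. First I would record what the Hibi--Stanley inequalities of Athanasiadis \cite[Theorem~1.3]{Ath04} already give: since $\Delta$ is a regular unimodular triangulation, the descending tail $h^*_{\floor{\frac{d+1}{2}}}(P)\ge\cdots\ge h^*_d(P)$ is automatic. Thus the whole problem reduces to the ascending inequalities $h^*_0\le h^*_1\le\cdots\le h^*_{\floor{\frac{d+1}{2}}}$, and it is precisely here that the boundary hypothesis must be used.

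The second step is to extract the algebraic content of the hypothesis. Let $V_{\mathrm{int}}$ and $V_{\mathrm{bdy}}$ be the interior and boundary lattice points, which are exactly the interior and boundary vertices of $\Delta$. The assumption that the induced subcomplex on $(\partial P)\cap\Z^d$ triangulates $\partial P$ forces this induced subcomplex to coincide with the boundary complex $\partial\Delta$; equivalently, every face of $\Delta$ all of whose vertices lie on $\partial P$ is already a boundary face. Setting the interior variables to zero then kills exactly the monomials whose support meets $V_{\mathrm{int}}$, and the surviving monomials correspond to faces of $\partial\Delta$, so in Stanley--Reisner terms the hypothesis says precisely
\[
K[\partial\Delta] \;=\; K[\Delta]/\bigl(X_v : v\in V_{\mathrm{int}}\bigr).
\]
In particular $\partial\Delta$ is a simplicial $(d-1)$-sphere whose face ring is the quotient of the ball's face ring by the interior-vertex variables.

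The engine of the proof is the strong Lefschetz property of the sphere $\partial\Delta$. Because $\partial\Delta$ is a general simplicial sphere (a triangulation of $\partial P$, not necessarily the boundary of a \emph{simplicial} polytope), I would invoke the strong Lefschetz property for simplicial spheres, i.e.\ the $g$-theorem for spheres, rather than Stanley's theorem for simplicial polytopes quoted above; this produces, for a generic linear form $\omega$ and a generic l.s.o.p.\ $\Theta$, an almost strong Lefschetz element for $K[\partial\Delta]/\Theta$. The decisive step is then to transfer this to the ball. Using the short exact sequence
\[
0 \longrightarrow \bigl(X_v : v\in V_{\mathrm{int}}\bigr) \longrightarrow K[\Delta] \longrightarrow K[\partial\Delta] \longrightarrow 0,
\]
the Cohen--Macaulayness of the ball $\Delta$ (balls are Cohen--Macaulay by Reisner's criterion), and the identification of the interior-face module with the canonical module of $K[\Delta]$, one shows that $\omega$ acts as an almost strong Lefschetz element for $K[\Delta]/\Theta$ itself, that is, multiplication by $\omega$,
\[
\omega\colon (K[\Delta]/\Theta)_{i-1}\longrightarrow (K[\Delta]/\Theta)_i,
\]
is injective for $i\le\floor{\frac{d+1}{2}}$.

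Finally I would read off the conclusion: injectivity of the maps above yields $h_{i-1}(\Delta)\le h_i(\Delta)$ for $i\le\floor{\frac{d+1}{2}}$, which is exactly the missing ascending chain up to the middle; combined with the descending tail from Athanasiadis this gives a peak at $h^*_{\floor{\frac{d+1}{2}}}(P)$ and hence unimodality. I expect the transfer to be the main obstacle. The trouble is that $\Theta$ (of length $d+1$) cannot be a regular sequence on $K[\partial\Delta]$, whose Krull dimension is one smaller, so the naive additivity of Hilbert functions along the short exact sequence fails and one must control the resulting Tor-terms. It is precisely the boundary hypothesis---which realizes $K[\partial\Delta]$ as the quotient by the interior variables and identifies the interior faces with the faces meeting $V_{\mathrm{int}}$---together with the strong Lefschetz property of $\partial\Delta$ that makes it possible to carry the injectivity from the sphere to the ball.
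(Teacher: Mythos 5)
Your overall skeleton --- reducing to the $h$-vector of $\Delta$ via \Cref{BetkeMcMullen}, encoding the boundary hypothesis algebraically, and driving everything with Lefschetz theory for spheres --- matches the spirit of the paper's proof, and your observation that the hypothesis says exactly $K[\partial\Delta]\cong K[\Delta]/(X_v : v\in V_{\mathrm{int}})$, with this ideal being the interior-face (canonical) module, is correct. But there are two genuine gaps. The smaller one: you invoke Athanasiadis \cite[Theorem~1.3]{Ath04} for the descending tail, but that theorem requires a \emph{regular} unimodular triangulation, and regularity is not among the hypotheses of \Cref{thm:UTunimodal}. The paper gets the tail without regularity: it extends $\Delta$ to a triangulated sphere $\Sigma$ by coning over $\partial\Delta$, uses the generic strong Lefschetz property of spheres \cite[Theorem~I(1)]{A18}, and pushes the Lefschetz isomorphism through the surjection $K[\Sigma]/\Theta \twoheadrightarrow K[\Delta]/\Theta$ to obtain surjectivity of multiplication by $\omega$ in the top half of degrees.

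The major gap is the transfer step, which you yourself flag as ``the main obstacle'' and then do not carry out: you assert that the short exact sequence $0\to(X_v : v\in V_{\mathrm{int}})\to K[\Delta]\to K[\partial\Delta]\to 0$, Cohen--Macaulayness, and the strong Lefschetz property of $\partial\Delta$ let one ``show'' that $\omega$ is an almost strong Lefschetz element for $K[\Delta]/\Theta$; but this is precisely the content of the theorem, and it does not follow formally from those inputs. Concretely, after reducing modulo $\Theta$ and splitting the resulting four-term exact sequence, a snake-lemma chase shows that injectivity of $\cdot\omega$ on $(K[\Delta]/\Theta)_{i-1}$ requires, among other things, injectivity of $\cdot\omega$ in low degrees on $K[\partial\Delta]/\Theta$ --- an Artinian reduction of the $d$-dimensional sphere ring by $d+1$ generic linear forms. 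That is a two-element (Koszul-type) Lefschetz statement about the sphere, not a consequence of the strong Lefschetz property for a single generic element, and controlling $\mathrm{Tor}_1^S(K[\partial\Delta],S/\Theta)$ runs into exactly the same issue. The paper avoids this route entirely: for the ascending half it uses the partition-complex theorem of Adiprasito--Yashfe \cite[Theorem~50]{adiprasito2021partition}, which gives an injection $K[\Delta]/\Theta \hookrightarrow \bigoplus_{v\in\mathrm{int}(\Delta)}K[\mathrm{star}(\Delta,v)]/\Theta$ (the boundary hypothesis guarantees that every facet of $\Delta$ contains an interior vertex, so these stars cover $\Delta$), then identifies each summand with the face ring of $\mathrm{link}(\Delta,v)$ via the Cone Lemma \cite[Theorem~7]{Lee}, \cite[Lemma~4.1]{A18}; the links of interior vertices are spheres, so the single-element Lefschetz theorem applies summandwise and injectivity descends to $K[\Delta]/\Theta$. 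Without this injection, or a worked-out substitute for your Tor analysis, your inputs only yield the classical relation $h_j(\Delta)-h_{d+1-j}(\Delta)=h_j(\partial\Delta)-h_{j-1}(\partial\Delta)$, which is far weaker than unimodality.
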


\begin{proof}
	With Proposition \ref{BetkeMcMullen}, it is enough to show that the $h$-vector of the triangulation is unimodal.
	
	As a triangulated disc, $\Delta$ is a Cohen-Macaulay complex which can be extended to a triangulated sphere $\Sigma$, for instance by attaching a cone over the boundary of $\Delta$. So for a generic l.s.o.p.\ $\Theta$, any generic degree one element $\omega \in (K[\Sigma]/\Theta)_1$ is a strong Lefschetz element for $K[\Sigma]/\Theta$ by \cite[Theorem I{(1)}]{A18}. 
	It follows that $h_{\ceil{\frac{d+1}{2}}} \geq \ldots \geq h_d$ for the $h$-vector of~$\Delta$, compare \cite[Lemma 2.2]{STANLEY1993251}. To see this directly, consider for $j\leq (d+1)/2$ the commutative diagram
	\[
\begin{tikzcd}		
	(K[\Sigma]/\Theta)_j \arrow{r}{\ \cdot \omega^{d+1-2j}\ } \arrow[two heads]{d}{} & (K[\Sigma]/\Theta)_{d+1-j} \arrow[two heads]{d}{} \\
	(K[\Delta]/\Theta)_j\arrow{r}{\ \cdot \omega^{d+1-2j}\ } &  (K[\Delta]/\Theta)_{d+1-j},
\end{tikzcd}
\]
	where the vertical maps are surjective restriction maps and the top horizontal map is the Lefschetz isomorphism, implying surjectivity for the bottom horizontal map. In particular, the multiplication map $(K[\Delta]/\Theta)_{j} \xrightarrow{\cdot\omega} (K[\Delta]/\Theta)_{j+1}$ is surjective for all $j\geq (d+1)/2$.
	
	For the desired monoticity of the first half, we use \cite[Theorem 50]{adiprasito2021partition}, which gives us an injection
		\[
\begin{tikzcd}		
K[\Delta]/\Theta \arrow[hook]{r} &\bigoplus\limits_{v\in \mathrm{int}(\Delta)} K[\mathrm{star}(\Delta,v)]/\Theta .
\end{tikzcd}
\]
	Observe now that $\Theta$ is also an l.s.o.p.\ in $K[\mathrm{star}(\Delta,v)]$. By the Cone Lemma, see \cite[Theorem 7]{Lee} and \cite[Lemma 4.1]{A18}, we have an isomorphism of the rings
	$K[\mathrm{star}(\Delta,v)]/\Theta$  and $K[\mathrm{link}(\Delta,v)])/\tilde{\Theta},$ 
	where $\tilde{\Theta}$ is the projection of $\Theta$ to the orthogonal complement of $v$. 
	The link of any vertex $v\in\mathrm{int}(\Delta)$ is a sphere, so  we again get the generic Lefschetz property. 
	Combining the above into a commutative diagram, we obtain for any $j\leq d/2$:
	\[
	\begin{tikzcd}		
		(K[\Delta]/\Theta)_j \arrow{r}{\ \cdot \omega^{d-2j}\ } \arrow[hook]{d}{} & (K[\Delta]/\Theta)_{d-j} \arrow[hook]{d}{} \\
		\bigoplus\limits_{v\in \mathrm{int}(\Delta)}( K[\mathrm{star}(\Delta,v)]/\Theta)_j\arrow{r}{\ \cdot \omega^{d-2j}\ } & \bigoplus\limits_{v\in \mathrm{int}(\Delta)} (K[\mathrm{star}(\Delta,v)]/\Theta)_{d-j}
	\end{tikzcd}
	\]
	Here $\omega\in(K[\Delta]/\Theta)_1$ is chosen generically, and hence is generic in all restrictions  $ (K[\mathrm{star}(\Delta,v)]/\Theta)_1$. Thus, the bottom horizontal map is a Lefschetz isomorphism on each summand, and the top horizontal map is again injective. This gives us the injectivity of the multiplication $(K[\Delta]/\Theta)_{j-1} \xrightarrow{\cdot\omega} (K[\Delta]/\Theta)_{j}$ for all $j\leq d/2$, which implies the desired monotonicity $h_0\leq \ldots \leq h_{\floor{\frac{d+1}{2}}}$.
	\end{proof}

\begin{exm}
	Consider the alcoved polygon $P = [-1,1]^2$ and its alcoved triangulation $\Delta$. Then the assumptions of \Cref{thm:UTunimodal} are not satisfied. For instance, the edge between $(-1,0)$ and $(0,1)$ is present in the triangulation but supported on the boundary (see \Cref{fig:squares} on the left). So the induced simplicial complex on $(\partial P)\cap \Z^2$ is not a triangulation of $\partial P$.
\end{exm}

We can remedy the issue from the previous example by more carefully triangulating alcoved lattice polytopes under the assumption that the lattice distance of the facets to the interior lattice points is $1$.
\begin{lem}\label{lem:triangulation}
	Let $P$ be a $d$-dimensional alcoved polytope in $\R^d$ with interior lattice points such that every facet of $P$ has lattice distance $1$ to the set of interior lattice points. Then $P$ has a regular unimodular triangulation $\Delta$ such that the induced simplicial complex on $(\partial P)\cap \Z^d$ is a triangulation of $\partial P$. Moreover, the induced subdivision of this triangulation on any facet $F$ of $P$ is the alcoved triangulation of $F$.
\end{lem}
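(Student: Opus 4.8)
The plan is to construct the triangulation from the outside in: first triangulate $\partial P$ in the only way compatible with the two requirements, and then fill the interior by coning the boundary cells inward to nearby interior lattice points, using the lattice distance $1$ hypothesis precisely to keep these cones unimodular. To fix the boundary, note that every facet $F$ lies in an alcove hyperplane and is, inside its affine hull, again an alcoved polytope, hence carries its own alcoved triangulation. Since all of these arise by restricting one and the same affine braid arrangement $\{H_d(i,j,k)\}$, the triangulations of any two facets agree on every common face, so the union $\partial\Delta$ of the alcoved triangulations of all facets is a simplicial complex and a triangulation of $\partial P$. This $\partial\Delta$ is forced to be the induced complex on $(\partial P)\cap\Z^d$ and is the alcoved triangulation on each facet, so the ``moreover'' clause follows as soon as the global triangulation restricts to $\partial\Delta$.

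Next I would produce a regular triangulation $\Delta$ of $P$ restricting to $\partial\Delta$ via a lifting function $\omega(x)=\omega_{\mathrm{alc}}(x)+M\cdot\mathbf 1[x\in\partial P]$, where $\omega_{\mathrm{alc}}$ is the alcoved lifting function, $M\gg 0$, and $\mathbf 1[x\in\partial P]$ is the indicator of the boundary lattice points; I take the induced regular subdivision, perturbing to break ties into a triangulation. Because every boundary point receives the same extra height $M$, the relative heights on $\partial P$ are governed by $\omega_{\mathrm{alc}}$ alone, so $\Delta$ restricts to $\partial\Delta$. For $M$ large the lower hull deep inside sees only interior points, so on the inner parallel polytope $P^{(1)}=\{x : a_k^\top x\le b_k-1 \text{ for all } k\}$ the triangulation is the (unimodular) alcoved one. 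The content lies in the shell $P\setminus P^{(1)}$, which over each facet $F$ has lattice width $1$: there the lower faces should connect a boundary alcove $\sigma\subset F$ to interior lattice points on the first inner hyperplane $\{a_F^\top x=b_F-1\}$, i.e.\ at lattice distance exactly $1$ from $F$.

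For a cell $\conv(\sigma,v)$ with $\sigma$ a unimodular $(d-1)$-simplex in $F$ and $v$ at lattice distance $1$ from $\aff(F)$, the normalized volume equals (lattice distance)$\,\cdot\,$(normalized volume of $\sigma$)$=1$, so the cell is unimodular; this is exactly the step that consumes the hypothesis, and it is also why lattice distance $\ge 2$ breaks the strategy, compare \Cref{rem:proofstrategy}. For the boundary condition, any simplex of $\Delta$ whose vertices all lie on $\partial P$ but which meets two distinct facets would lie in $\mathrm{int}(P)$ and sit strictly above an interior lattice point under the lifting, hence could not be a lower face; therefore every all-boundary simplex is contained in a single facet, and the induced complex on $(\partial P)\cap\Z^d$ is precisely $\partial\Delta$, a triangulation of $\partial P$.

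The hard part will be the shell near the lower-dimensional faces of $P$. Over the central part of a facet $F$ the inward translate of a boundary alcove stays in $P$ and directly supplies a distance-$1$ apex, but near a ridge $F\cap F'$ the adjacent facet can cut this translate off, so one must show that a distance-$1$ interior lattice point is available to cone \emph{every} boundary alcove and that the cones coming from neighbouring facets glue into a single unimodular triangulation of the corner region. Controlling these corners---and checking that the lattice distance $1$ condition imposed on \emph{all} facets is exactly what makes the overlapping width-$1$ shells fit together unimodularly---is where I expect the real work of the proof to lie.
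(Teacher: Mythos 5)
Your lifting function is, up to rescaling, exactly the paper's: writing $b$ for the $0/1$ indicator of the boundary lattice points and $a$ for the alcoved (quadratic) heights, your $a+M\cdot b$ and the paper's $b+\varepsilon a$ (with $\varepsilon=1/M$) induce the same regular subdivision, and no tie-breaking perturbation is needed since $a$ already restricts to a triangulation of every cell. The genuine gap is in the unimodularity step. You verify unimodularity only for cells of the form $\conv(\sigma\cup\{v\})$, a boundary alcove joined to a \emph{single} interior vertex at lattice distance $1$. That picture is correct only when the face $F'$ of $\conv(\mathrm{int}(P)\cap\Z^d)$ parallel to $F$ is a single point (as for $[-1,1]^2$). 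In general the cells over a facet $F$ are joins $\sigma_0 * \sigma_1$ with $\sigma_0\subset F$ and $\sigma_1\subset F'$ both possibly of positive dimension, and your base-times-height volume formula does not apply to them. What is needed is that a lattice $d$-simplex spanning two parallel lattice hyperplanes at lattice distance $1$ is unimodular whenever its faces in the two hyperplanes are unimodular; the paper imports exactly this from \cite[Lemma 4.15(2)]{HPPS17}, applied to each Cayley cell $\conv(F\cup F')$. Without this (or an equivalent lemma) the proof does not close.

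Relatedly, the part you defer as ``the real work'' --- gluing overlapping width-$1$ shells near ridges --- is not where the work lies; it dissolves once the coarse subdivision induced by $b$ is described correctly. Its maximal cells are the interior hull $\conv(\mathrm{int}(P)\cap\Z^d)$ and, for each facet $F$, the single Cayley cell $\conv(F\cup F')$, which is a lower face of the $b$-lifting supported by the affine function $x\mapsto a_F^{\top}x-(b_F-1)$, where $a_F^{\top}x\le b_F$ is the facet inequality; this is precisely where the distance-$1$ hypothesis enters. Being cells of one regular subdivision, they tile $P$ and meet along common faces: there are no overlapping shells, no inward translates getting cut off by adjacent facets, and no corner analysis. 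This cell structure also repairs your boundary argument, which as stated is incorrect: a simplex with all vertices on $\partial P$ that meets $\mathrm{int}(P)$ need not ``sit strictly above an interior lattice point'' --- in $P=[0,4]^2$ the simplex $\conv\{(0,0),(1,0),(0,1)\}$ meets the interior of $P$ but contains no interior lattice point. What actually rules out such simplices is refinement: every simplex of $\Delta$ lies in a cell of the $b$-subdivision, and the only lattice points of $\conv(F\cup F')$ on $\partial P$ are those of $F$, so an all-boundary simplex lies in a single facet.
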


\begin{proof}
	We give a height function for the desired triangulation. Let $A$ be the set of lattice points in $P$. First, define the function $b\colon A \to \R$ sending every interior lattice point of $P$ to $0$ and every boundary lattice point of $P$ to $1$. This height function induces the subdivision of $A$ whose facets are the convex hull of the interior lattice points of $P$ and Cayley polytopes over the facets of $P$. For every facet of $F$ of $P$, the corresponding Cayley polytope is combinatorially equivalent to 
	\[ \conv\left( (F\times \{0\}) \cup (F'\times \{1\}) \right) \]
	where $F'$ is a set of interior lattice points at lattice distance $1$ from $F$, namely the face of $\conv(\mathrm{int}(P)\cap \Z^d)$ parallel to $F$.

	Second, define the height function 
	\[ a\colon  \left \{ 
		\begin{array}{l}
			A \to \R, \\
			(x_1,\ldots,x_d) \mapsto \sum_{i=1}^d x_i^2 +\sum_{\{i,j\}\in \{1, \ldots, d\}} (x_i-x_j)^2 
		\end{array}
	\right. \]
	that induces the alcoved triangulation of $P$. The height function of our desired triangulation is $h = b + \varepsilon a$ for sufficiently small $\varepsilon$ such that this height function induces a triangulation refining the subdivision induced by $b$. This regular triangulation is then unimodular by \cite[Lemma 4.15(2)]{HPPS17}. The induced simplicial complex on the boundary of $P$ is the boundary complex because the triangulation refines the subdivision induced by $b$. Since $b$ is constant on the boundary, the height function $h$ induces on every facet of $P$ the same subdivision as the height function $a$, which induces the alcoved triangulation.
\end{proof}

\begin{exm}
	The triangulation of the polygon $P = [-1,1]^2\subset \R^2$ given by the lemma is simply the cone over the boundary complex whose apex is the unique interior lattice polytope. The Cayley polytopes are the pyramids over the edges with respect to the interior lattice point and they are triangulated unimodularly in a different way compared to the alcoved triangulation. Both are shown in \Cref{fig:squares}.
\end{exm}

\begin{figure}
	\begin{tikzpicture}
		\node (p1) at (-6,-2) {};
		\node (p2) at (-4,-2) {};
		\node (p3) at (-2,-2) {};
		\node (p4) at (-6,0) {};
		\node (p5) at (-4,0) {};
		\node (p6) at (-2,0) {};
		\node (p7) at (-6,2) {};
		\node (p8) at (-4,2) {};
		\node (p9) at (-2,2) {};

		\filldraw[black] (p1) circle (4pt);
		\filldraw[black] (p2) circle (4pt);
		\filldraw[black] (p3) circle (4pt);
		\filldraw[black] (p4) circle (4pt);
		\filldraw[black] (p5) circle (4pt);
		\filldraw[black] (p6) circle (4pt);
		\filldraw[black] (p7) circle (4pt);
		\filldraw[black] (p8) circle (4pt);
		\filldraw[black] (p9) circle (4pt);
		\draw[very thick,black] (p1) -- (p2) -- (p3) -- (p6) -- (p9) -- (p8) -- (p7) -- (p4) -- (p1);
		\draw[thick,black!50!white] (p2) -- (p5) -- (p8);
		\draw[thick,black!50!white] (p4) -- (p5) -- (p6);
		\draw[thick,black!50!white] (p1) -- (p5) -- (p9);
		\draw[thick,black!50!white] (p4) -- (p8);
		\draw[thick,black!50!white] (p2) -- (p6);

		\node (q1) at (6,-2) {};
		\node (q2) at (4,-2) {};
		\node (q3) at (2,-2) {};
		\node (q4) at (6,0) {};
		\node (q5) at (4,0) {};
		\node (q6) at (2,0) {};
		\node (q7) at (6,2) {};
		\node (q8) at (4,2) {};
		\node (q9) at (2,2) {};

		\filldraw[black] (q1) circle (4pt);
		\filldraw[black] (q2) circle (4pt);
		\filldraw[black] (q3) circle (4pt);
		\filldraw[black] (q4) circle (4pt);
		\filldraw[black] (q5) circle (4pt);
		\filldraw[black] (q6) circle (4pt);
		\filldraw[black] (q7) circle (4pt);
		\filldraw[black] (q8) circle (4pt);
		\filldraw[black] (q9) circle (4pt);
		\draw[very thick,black] (q1) -- (q2) -- (q3) -- (q6) -- (q9) -- (q8) -- (q7) -- (q4) -- (q1);

		\draw[thick,black!50!white] (q1) -- (q5);
		\draw[thick,black!50!white] (q2) -- (q5);
		\draw[thick,black!50!white] (q3) -- (q5);
		\draw[thick,black!50!white] (q4) -- (q5);
		\draw[thick,black!50!white] (q6) -- (q5);
		\draw[thick,black!50!white] (q7) -- (q5);
		\draw[thick,black!50!white] (q8) -- (q5);
		\draw[thick,black!50!white] (q9) -- (q5);
	\end{tikzpicture}
	\caption{Two triangulations of the relexive square: the alcoved triangulation on the left and the triangulation from \Cref{lem:triangulation} on the right.}
	\label{fig:squares}
\end{figure}
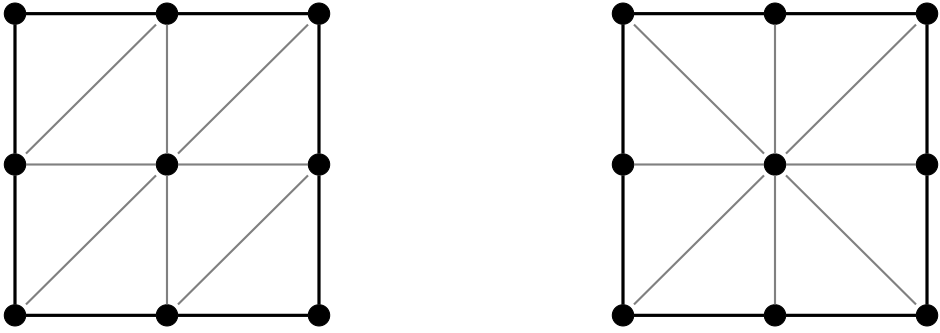

\begin{rem}\label{rem:proofstrategy}
	The assumption that the facets have lattice distance $1$ to the interior lattice points in \Cref{lem:triangulation} is necessary. If an alcoved polytope $P$ has a facet $F$ that has lattice distance at least $2$ to the interior lattice points, then $P$ cannot have any unimodular triangulation $\Delta$ such that the induced simplicial complex on the boundary points is the boundary complex. Indeed, any such triangulation that is compatible with the boundary has a simplex over a simplex in the facet $F$ (full-dimensional relative to $F$) that has height at least $2$ and is therefore not unimodular.
\end{rem}

\begin{proof}[{Proof of \Cref{thm:interiorlatticepoints}}]
	Combine \Cref{thm:UTunimodal} and \Cref{lem:triangulation}.
\end{proof}

\subsection{The lattice distance from the interior lattice points}
Since the $h^*$-vector of the alcoved polytopes are the $h$-vectors of the alcoved triangulations, it follows that $h^*_i(P)\ge h^*_i(P')$ for all $i \in \{0, \ldots, d\}$. A good approximation of $P$ by $P'$ gives a good approximation of $h^*(P)$ by $h^*(P')$. It is therefore interesting to know how well $P'$ approximates $P$. The next theorem tells us how far a facet can be from the interior lattice points.
\begin{theorem} \label{thm:distance}
Let $P$ be a $d$-dimensional alcoved polytope with interior lattice points. Then the maximal lattice distance of a facet of $P$ to the interior lattice points is $d-1$.
\end{theorem}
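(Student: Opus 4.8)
The plan is to translate the problem into one about systems of difference constraints and shortest paths in a weighted digraph on the $d+1$ nodes $\{0,1,\dots,d\}$; the bound $d-1$ then emerges from counting edges of a simple path, sharpened by one using that the chosen facet is irredundant. Throughout I assume $d\ge 2$ (for $d=1$ an alcoved polytope with an interior lattice point is a segment of lattice length at least $2$, whose two facets are at lattice distance $1$).

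First I would fix a facet $F$ and write its defining inequality as $y_i-y_j\le k$ with $y_0=0$, indices $i,j\in\{0,\dots,d\}$ and $k=\max_{y\in P}(y_i-y_j)$, so that the lattice distance from $F$ to an interior lattice point $q$ is $k-(y_i(q)-y_j(q))$. Writing $P=\{y:\langle a_r,y\rangle\le b_r\}$ for the irredundant description with $a_r\in\{e_s\}\cup\{e_s-e_t\}$, I would introduce the shrunk polytope $P'=\{y:\langle a_r,y\rangle\le b_r-1\text{ for all }r\}$. Since all data are integral, a lattice point lies in $\mathrm{int}(P)$ if and only if it satisfies every facet inequality strictly, i.e.\ if and only if it lies in $P'$; hence $\mathrm{int}(P)\cap\Z^d=P'\cap\Z^d$. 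As $P'$ is again alcoved, and thus a lattice polytope by total unimodularity, its integral and rational maxima of $y_i-y_j$ agree, so the distance from $F$ to the interior lattice points equals $t:=k-\max_{P'}(y_i-y_j)$, and it remains to bound $t$.

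Next I would realize both maxima as shortest-path lengths. On $\{0,1,\dots,d\}$ form the digraph $\mathcal{G}$ with an edge $b\to a$ of weight $m$ for every facet inequality $y_a-y_b\le m$ (the facet $F$ is the edge $j\to i$ of weight $k$). By LP duality for difference constraints, $\max_P(y_i-y_j)=\mathrm{dist}_{\mathcal G}(j\to i)=k$, and likewise $\max_{P'}(y_i-y_j)=\mathrm{dist}_{\mathcal G'}(j\to i)$, where $\mathcal G'$ is $\mathcal G$ with every edge weight decreased by $1$. Because $P'$ is nonempty and bounded (the hypothesis that $P$ has interior lattice points is exactly $P'\cap\Z^d\ne\emptyset$), $\mathcal G'$ has no negative cycle, so a shortest $j\to i$ path $\pi$ in $\mathcal G'$ may be taken simple; such a path visits at most $d+1$ vertices and hence uses $p:=|\pi|\le d$ edges. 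The crux — and the step that upgrades the easy bound $t\le d$ to the sharp $t\le d-1$ — is to invoke irredundancy of $F$. If $\pi$ is the single direct edge $j\to i$, then $t=k-(k-1)=1\le d-1$. Otherwise $\pi$ is indirect, hence (being simple) avoids the direct edge, so it is a $j\to i$ path in $\mathcal G$ not using $F$; irredundancy means $y_i-y_j\le k$ is \emph{not} implied by the remaining inequalities, i.e.\ every such path has $\mathcal G$-length at least $k+1$. Since the $\mathcal G'$-length of $\pi$ is its $\mathcal G$-length minus $p$, we get $\max_{P'}(y_i-y_j)=\mathrm{len}_{\mathcal G'}(\pi)\ge(k+1)-p$, whence $t=k-\mathrm{len}_{\mathcal G'}(\pi)\le p-1\le d-1$. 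I expect this translation together with the irredundancy bookkeeping to be the only delicate part; everything else is standard shortest-path duality.

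Finally, for tightness I would exhibit an explicit alcoved $d$-polytope attaining distance $d-1$, for instance
\[ P=\{y\in\R^d:\ 0=y_0\le y_1\le\cdots\le y_d\le 2d-1,\ \ y_r\le y_{r-1}+2\ \text{ for }1\le r\le d\}. \]
Here $(1,2,\dots,d)$ is an interior lattice point. The inequality $y_d\le 2d-1$ is a facet: the only competing $0\to d$ route is the chain $0\to1\to\cdots\to d$ of $\mathcal G$-length $2d=k+1>k$. In $\mathcal G'$ that chain has length $d$ while the direct edge has length $2d-2\ge d$, so $\max_{P'}y_d=d$, and the facet $y_d=2d-1$ is at lattice distance $(2d-1)-d=d-1$ from the interior lattice points, matching the upper bound.
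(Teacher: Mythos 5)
Your proposal is correct, but it reaches the bound by a genuinely different route than the paper, even though both proofs bottom out in the same combinatorial fact: a simple path (equivalently, a path in a forest) on the $d+1$ nodes $\{0,1,\dots,d\}$ has at most $d$ edges. The paper argues geometrically, in two cases: if deleting the half-space of $F$ leaves an unbounded polyhedron, a recession-cone argument shows $F$ has lattice distance $1$; otherwise it takes an interior lattice point closest to $F$, normalizes it to the origin, uses the minimal alcoved polytope $Q_d$ of \Cref{def:Qd} to produce boundary lattice points at distance $1$, assembles from the facets through those points an auxiliary polyhedron $R$ whose non-$F$ inequalities all have right-hand side $1$, and then chains equations $x_s-x_t=1$ along a forest path. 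You instead make a single uniform reduction to shortest-path duality for difference-constraint systems: the shrunk polytope $P'$ captures exactly the interior lattice points (by integrality of the data), total unimodularity puts the maximum of $y_i-y_j$ over $P'$ at a lattice point, and---this is your replacement for the paper's use of $Q_d$ and the closest-point normalization---irredundancy of $F$ forces every $j\to i$ path avoiding the direct edge to have $\mathcal{G}$-length at least $k+1$, whence $t\le p-1\le d-1$. Your approach buys: no case distinction (the unbounded case disappears, since the direct edge makes reachability trivial and boundedness plus feasibility of $P$ and $P'$ make the duality apply to both graphs), a transparent origin for the sharpening from $d$ to $d-1$, and a proof that covers attainment of the bound in the same framework, your slack-$2$ chain example playing the role of the paper's \Cref{prop:example_sharpbound} (which instead truncates the $(d+1)$-fold dilate of the chain order polytope by $x_1\le d$); both examples are valid. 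The cost is that you lean on the duality $\max_P(y_i-y_j)=\mathrm{dist}_{\mathcal{G}}(j\to i)$, which is standard (Bellman--Ford feasibility/LP duality for a totally unimodular constraint matrix) but should be cited or proved in a final write-up. Two harmless loose ends: $P'$ need not be full-dimensional, so calling it ``alcoved'' is an abuse, though the fact you actually use (integral vertices via total unimodularity) holds regardless; and your $d=1$ remark correctly exposes that the statement literally fails there (the distance is $1$, not $0$), an implicit $d\ge 2$ assumption that the paper also makes silently.
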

\begin{proof}
Let $F$ be a facet of $P$. 
Let $P'$ be the $d$-dimensional alcoved polyhedron obtained by removing the
facet-defining hyperplane of $F$
from the hyperplane description of $P$.
We distinguish between two different cases:
Either $P'$ is an unbounded polyhedron or a polytope.

Case 1.
If $P'$ is an unbounded polyhedron, then $F$ has lattice distance $1$ to the interior lattice points.
To see this, observe that the
recession cone $\mathcal{C}$ of $P'$ is an alcoved cone, i.e. an affine cone that is an alcoved polyhedron. 
The intersection of an alcoved polyhedron with alcove hyperplanes (hyperplanes parallel to facets of $Q_d$) 
is again an alcoved polyhedron, any possible vertices have to be lattice points by Definition~\ref{def:alcoved}.
Let $x$ be an interior lattice point of $P$ (and hence of $P'$). Let $\mathcal{C}'$ denote the translate of $\mathcal{C}$
with apex $x$. $\mathcal{C}'$ is contained in the interior of $P'$. Let $H$ be the hyperplane parallel to $F$ that has distance 1 from $x$ and separates $x$ and $F$. The intersection of $\mathcal{C}'$ with $H$ is a lattice polytope
contained in the interior of  $P'$. If $F$ has distance larger than $1$ from $x$, then  $\mathcal{C}' \cap H$ is contained in the interior of $P$ and its vertices are interior lattice points of $P$ with smaller lattice distance to $F$ than the distance between $x$ and $F$. This shows that $F$ has lattice distance $1$ to the interior lattice points of $P$.

 \begin{figure}[htbp]
\centering
 \includegraphics[width=.7\linewidth]{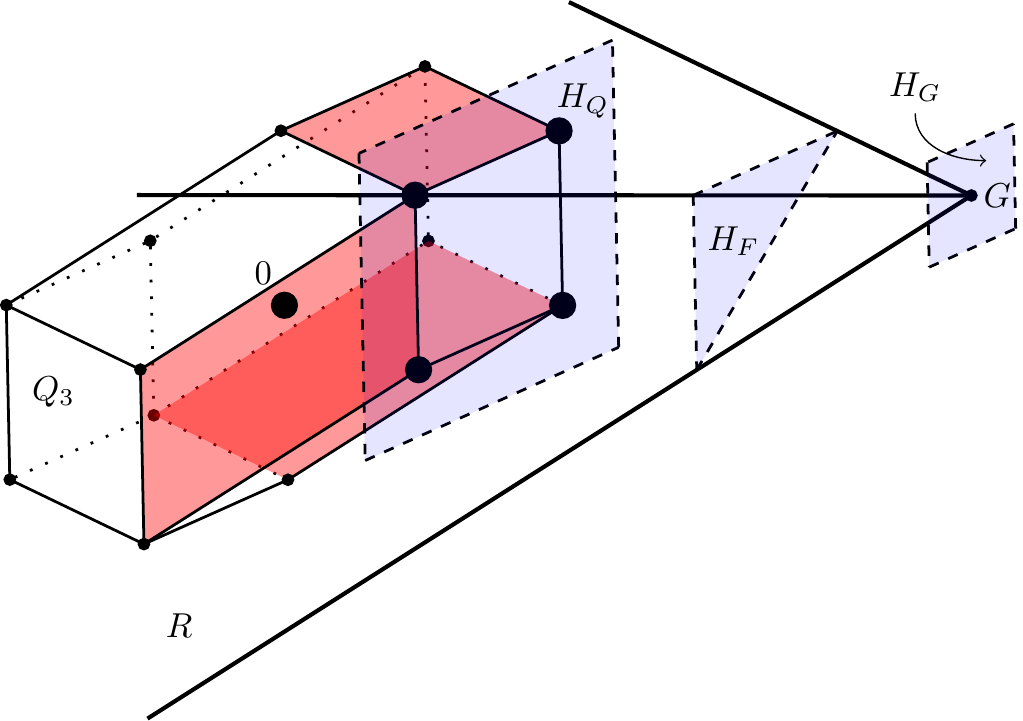}
\caption[Facet cannot be too far from the interior lattice points]{The hyperplanes containing the red facets of $Q_3$ intersect in the affine hull of face $G$ of $R$.}
\label{fig:propdist}
\end{figure}

Case 2. 
See Figure~\ref{fig:propdist} for an example in dimension 3.
Let $x$ be an interior lattice point of $P$ closest to $F$. We may assume that $x=0$.
 Let $H_F$ be the hyperplane containing $F$.
The polytope $Q_d$ from Definition~\ref{def:Qd} is contained in all alcoved $d$-polytopes which contain the origin in the interior. 
Any facet of an alcoved $d$-polytope is parallel to two facets of $Q_d$.
Among the two facet-defining hyperplanes of $Q_d$ which are parallel to $F$, let $H_Q$ denote the one separating $0$ and $F$.
 The vertices of $Q_d$ on $H_Q$ are lattice points in $P$ which are closer to $F$ than $0$. Since $0$ is closest to $F$ among all interior lattice points of $P$, these lattice points have to be in the boundary of $P$, each of the points has to be contained in at least one facet of $P$.
 Consider only the facets of $P$ containing the vertices of $Q_d$ in $H_Q$ and (additionally) facet $F$.
 The facet-defining half-spaces of these facets define an alcoved polyhedron with $0$ in the interior.
 If the polyhedron $R$ obtained by removing the facet-defining half-space of $F$ 
 from the hyperplane description of the polyhedron
 is unbounded in the direction of the facet normal of $F$, 
 then by case 1 $H_F$ (and hence $F$) has lattice distance 1 from $0$.
 Assume the polyhedron $R$ is bounded in direction of the facet normal of $F$.
Then there is a hyperplane $H_G$ parallel to $H_F$ which intersects $R$ in a $k$-face $G$ of $R$ and such that $H_F$ separates $x$ and $H_G$.
If $H_Q= H_G$, then $H_F=H_Q$, and $H_F$ has distance 1 from $0$.
Assume $H_G$ is not equal to $H_Q$. Then $H_G$ is not equal to $H_F$ either.
We will show that $H_G$ has lattice distance at most $d$ from $0$,
and therefore $H_F$ has at most lattice distance $d-1$ from $0$.
$G$ is given as an intersection of $d-k$ facets of $R$.
The hyperplane $H_G$ containing $G$ and parallel to $F$ is of the form $\{x\in \R^d\mid x_i-x_j=l\}$ for some $i, j \in\{0, \ldots, d\}$
with $i\ne j$ and $x_0\coloneqq 0$ and for some positive integer $l$.

We know that the difference $x_i-x_j=l$ is defined from some equations of the form $x_s-x_t=1$,
 for $s,t\in \{0, \ldots, d\}$ and $s\ne t$.
So the difference $l$ between the two variables $x_i$ and $x_j$ can be obtained from setting the difference between some pairs of $d-1$ variables to 1.
This shows that $l\in \{0,\ldots, d\}$.

We can also state this as a graph theoretical problem:
Let $G$ be a simple graph on $d+1$ vertices $v_0, \ldots, v_{d+1}$.
There is an edge between vertex $v_s$ and vertex $v_t$ if and only if 
either $\{x\in \R^d\mid x_s-x_t=1\}$ or $\{x\in \R^d\mid x_t-x_s=1\}$ is a hyperplane of $R$ intersecting in face $G$.
If $G$ would contain a cycle $(v_{s_1},v_{s_2}),(v_{s_2},v_{s_3}),\ldots, (v_{s_{r-1}},v_{s_r}), (v_{s_r},v_{s_1})$, then 
$x_{s_1} > x_{s_2} >\ldots > x_{s_r} >x_{s_1}$, a contradiction.
So $G$ does not contain any cycles, 
it is a forest.
The condition that $R$ is bounded in the direction of the facet-normal of $F$
translates to the condition that vertex $x_i$ and vertex $x_j$ are path-connected.
The longest possible path-length in a forest on $d+1$ vertices is $d$.
The difference $l$ is therefore at most $d$ and facet $F$ has lattice distance at most $d-1$ to $0$.
\end{proof}

We end this section with a proposition that shows that
the bound from Theorem~\ref{thm:distance} is sharp for all dimensions $d$. 

\begin{proposition} \label{prop:example_sharpbound} 
There is a $d$-dimensional  alcoved polytope for any $d\in \N$ which has a facet with lattice distance $d-1$ to the interior 
lattice points.
\end{proposition}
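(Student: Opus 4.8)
The plan is to write down an explicit extremal family realizing the bound from \Cref{thm:distance}, guided by its proof: equality there forces the auxiliary face $G$ to sit on a hyperplane $\{x_i - x_j = d\}$ built from a path through \emph{all} $d+1$ vertices $x_0, x_1, \dots, x_d$, with the facet $F$ exactly one lattice layer closer. Reverse-engineering this, I would fix $x_0 = 0$ as in \Cref{def:Qd} and start from the simplicial cone $K = \{x \in \R^d : x_i - x_{i-1} \le 1 \text{ for all } i \in [d]\}$, whose apex is the single vertex $v^\ast = (1,2,\dots,d)$ and whose recession cone is $\{u : 0 \ge u_1 \ge \dots \ge u_d\}$ with extreme rays $\rho_k = -(e_k + \dots + e_d)$. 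Truncating the apex at lattice height $d-1$ and capping the recession directions, I propose the candidate
\[
  P \coloneqq \bigl\{ x \in \R^d : x_i - x_{i-1} \le 1 \text{ for } i \in [d],\ -1 \le x_d \le d-1 \bigr\},
\]
and take $F \coloneqq P \cap \{x_d = d-1\}$ as the facet that is meant to be far from the interior. Every defining inequality is of alcove type ($x_1 = x_1 - x_0$, the $x_i - x_{i-1}$, and $\pm x_d = \pm(x_d - x_0)$), so $P$ is an alcoved polytope by Definition~\ref{def:alcoved}.

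The heart of the argument is a single observation about interior lattice points. A lattice point $p$ lies in the interior of $P$ precisely when $p_i - p_{i-1} < 1$ for all $i \in [d]$ and $-1 < p_d < d-1$; over $\Z$ the first condition reads $p_i - p_{i-1} \le 0$, i.e.\ $0 = p_0 \ge p_1 \ge \dots \ge p_d$. This reversed chain forces $p_d \le 0$ for \emph{every} interior lattice point $p$. Since $F$ lies on the primitive hyperplane $\{x_d = d-1\}$, the lattice distance from such a $p$ to $F$ equals $(d-1) - p_d \ge d-1$, and the origin $0$ (which is interior whenever $d \ge 2$) attains equality. Hence the lattice distance from the facet $F$ to the set of interior lattice points is exactly $d-1$, as required. (In fact the constraint $-1 \le x_d$ makes $0$ the unique interior lattice point, which only simplifies the bookkeeping.)

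It then remains to verify the routine geometric facts. Boundedness follows from the recession cone of $P$ being $\{u : u_i - u_{i-1} \le 0,\ u_d = 0\}$, which together with $u_0 = 0$ collapses to $\{0\}$. To see that $F$ is a genuine $(d-1)$-dimensional facet and not a smaller face, I would compute it as the height-$(d-1)$ slice of $K$, namely $F = \conv\{v^\ast + \rho_1, \dots, v^\ast + \rho_d\}$; the difference vectors $(v^\ast + \rho_k) - (v^\ast + \rho_1) = e_1 + \dots + e_{k-1}$ for $k = 2, \dots, d$ are linearly independent and orthogonal to $e_d$, so these $d$ points are affinely independent inside $\{x_d = d-1\}$, and the inequality $x_d \le d-1$ is irredundant because the apex $v^\ast$ (with $x_d = d$) is strictly cut off.

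The main obstacle is conceptual rather than computational: one must recognize that the right geometry is a simplicial cone whose interior lattice points are squeezed onto the reversed chain $0 \ge x_1 \ge \dots \ge x_d$, so that a single facet obtained by shaving the apex automatically lies $d-1$ layers beyond all of them. Once this cone is identified, every remaining step is a short verification. For $d = 1$ the bound $d-1 = 0$ cannot be realized by any facet, so the statement is understood for $d \ge 2$.
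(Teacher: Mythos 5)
Your proposal is correct and is essentially the paper's own construction: under the unimodular affine change of coordinates $z_i = i + x_{d+1-i}$, your polytope maps exactly onto the paper's example, namely the $(d+1)$-fold dilation of the order simplex of a chain, $\{z \in \R^d : 0 \le z_1 \le z_2 \le \cdots \le z_d \le d+1\}$, cut by the additional alcove hyperplane $z_1 \le d$, with your interior lattice point $0$ corresponding to the paper's $(1,2,\ldots,d)$ and your facet $\{x_d = d-1\}$ corresponding to the paper's new facet $\{z_1 = d\}$. The verifications (unique interior lattice point, the cut produces a genuine facet, lattice distance $d-1$) match the paper's up to this coordinate change, so no further comparison is needed.
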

\begin{proof}
We construct an example from the following polytope:
Let $P$ be the  scaling of the order polytope of 
 the chain of length $d$ by $(d+1)$:
\begin{align*}
P= \{x\in \R^d: x_1&\ge 0,\\
x_1 &\leq x_2, \\
x_2 &\leq x_3, \\
&\vdots \\
x_{d-1} &\leq x_d,\\
x_d &\le d+1\}. 
\end{align*}
$P$ is  a reflexive alcoved simplex. The vertex description of $P$ is given by
\begin{align*}
P=\conv\{&(0,0,\ldots, 0,0),(0,0, \ldots, 0, d+1),\ldots, \\
&(0,d+1, \ldots, d+1, d+1),(d+1,d+1, \ldots, d+1, d+1)\}.
\end{align*}
Its unique interior lattice point is $p=(1,2,3, \ldots, d)$.
The polytope 
$P \cap \{x\in \R^d\mid x_1 \le d\}$ is still an alcoved polytope with unique interior lattice point $p$.
The  hyperplane $\{x\in \R^d\mid x_1 =d\}$ defining the new facet has distance $d-1$ from point $p$.
\end{proof}

For a visualization in dimension $3$, see Figure~\ref{fig:exampledist}.
\begin{figure}[htbp]
\centering
 \includegraphics[width=.9\linewidth]{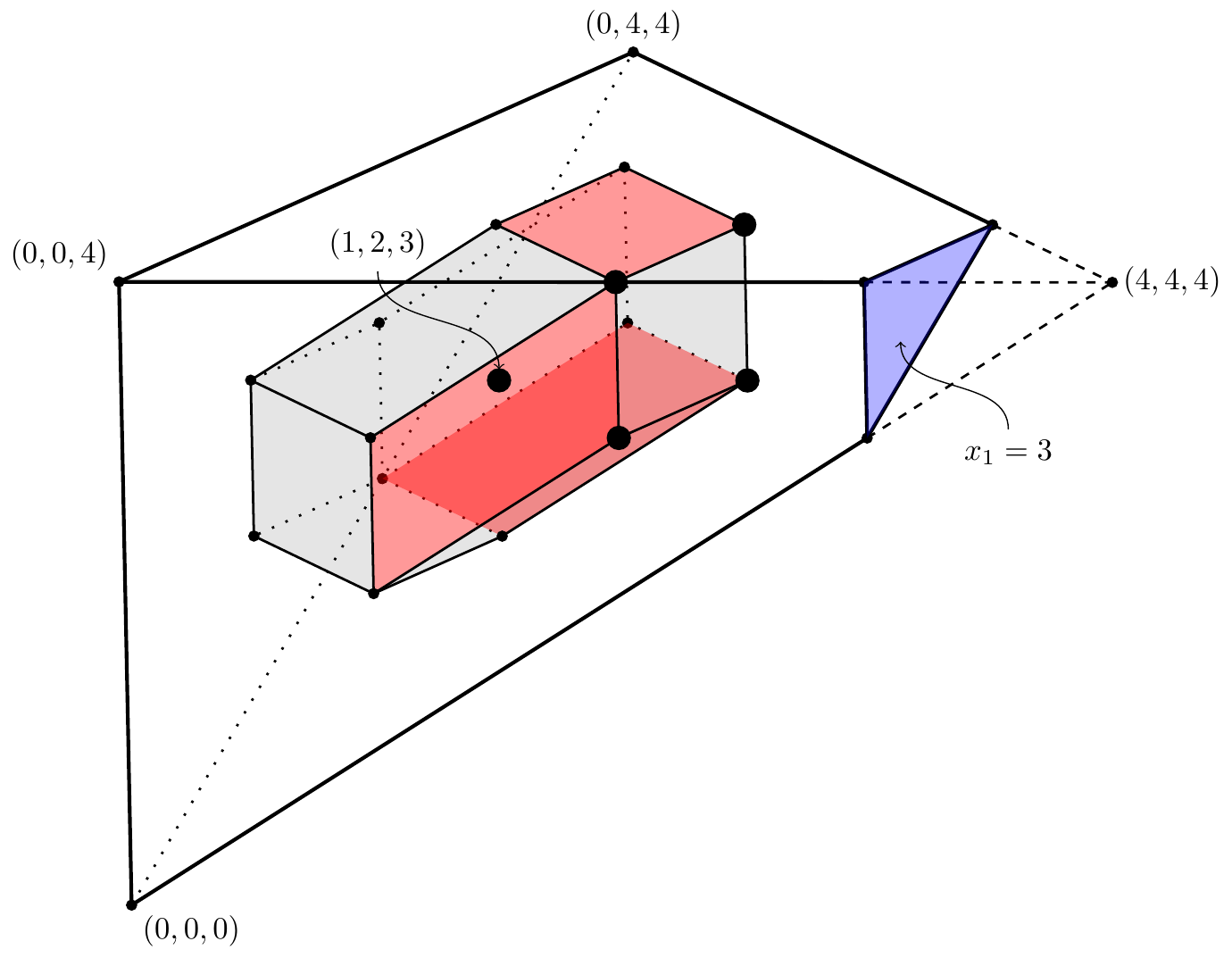}
\caption[Facet with distance $d-1$ to interior lattice points]{The blue facet has lattice distance $2$ from the interior lattice point. The red facets of $Q_3$ 
are contained in facets of the bigger polytope.}
\label{fig:exampledist}
\end{figure}

\section{Computational Experiments}\label{sec:computations}
\lstset{language=Python,showstringspaces=false,breaklines=true} 
Here we completely list the algorithms used to calculated $h^*$-vectors of polytopes and test for unimodality that give the results shown in the last subsection. The code is complete so that readers can run experiments by copy-paste. All algorithms are written as SageMath code \cite{SageMath}.
We also give some examples of alcoved polytopes and their $h^*$-vectors.

\subsection{\texorpdfstring{Convert Ehrhart polynomial to $h^*$-polynomial}{Convert Ehrhart polynomial to h{*}-polynomial}}\label{alg:ehrconv}
The following program was written in collaboration with Sophia Elia. 
This program converts the Ehrhart polynomial of a lattice polytope to the $h^*$-polynomial.
The Ehrhart  polynomials of lattice polytopes can be calculated  using LattE Integrale \cite{Latte}.
There is also a built-in normaliz function \cite{Normaliz} to calculate the Ehrhart series of a polytope, but its run-time was too long for our examples.

The approach for this function is based on Eulerian polynomials (see~\cite[Section~2.2]{BeRo07} The Ehrhart series can be rewritten as follows:
\[ \Ehr_P(t) = \sum_{m\geq 0} L_P(m)t^m = \sum_{m\geq 0}\sum_{j \geq 0}^{d}c_j m^j t^m,\]
where we write the Ehrhart polynomial as $L_P(m) = \sum_{j \geq 0} c_j m^j$. The $h^*$-polynomial is defined as $(1-t)^{d+1} \Ehr_P(t)$. 
We now exploit that the Eulerian numbers $A(d,k)$ can be defined by the generating function expression
\[
	\sum_{j\geq 0} j^d z^j = \frac{\sum_{k=0}^d A(d,k) z^k}{(1-z)^{d+1}}.
\]
So after some calculation, we can write the $h^*$-polynomial as
\[
	h^*_P(z) = \sum_{i=0}^d c_i \left( \sum_{k=0}^d A(i,k) z^k\right) (1-z)^{d-i}.
\]

We begin with a function that outputs the Eulerian numbers up to $A(n,n)$ arranged in an $(n+1)\times (n+1)$ matrix.
\begin{lstlisting}
def eulerian_numbers(n):
    A = zero_matrix(n+1,n+1)
    A[0,0] = 1
    for i in range(1,n+1):
        A[i,0] = 0
        A[i,1] = 1
    for j in range(2,n+1):
        for k in range(2,j+1):
            if j == k:
                A[j,k] = 1
            else:
                A[j,k] = (j-k+1)*A[j-1,k-1] +k*A[j-1,k]
	return(A)
\end{lstlisting}

The following function makes the Eulerian polynomials based on the previous function computing the Eulerian numbers.
\begin{lstlisting}
def eulerian_polynomial(n):
    R = PolynomialRing(ZZ, 't')
    t = R.gen()
    A = eulerian_numbers(n)
	return(R.sum( A[n,i]*t**i for i in range(n+1)))
\end{lstlisting}

Finally, here is the function that converts the Ehrhart polynomial to the $h^*$-polynomial. As input, it takes \texttt{ehr\_poly}, a polynomial in \texttt{t} with rational coefficients. The output is the corresponding $h^*$-polynomial.
\begin{lstlisting}
def ehr_to_hstar(ehr_poly):
    # change the polynomial into a vector
    Ring = PolynomialRing(QQbar, 't')
    t = Ring.gen()
    ehr_poly = ehr_poly.coefficients()
    # get the dimension of the polytope
    d = len(ehr_poly)-1
    # compute the h* polynomial
    factors = zero_vector(d+1)
    factors = factors.change_ring(Ring)
    for j in range(d+1):
        factors[j] = ehr_poly[j]*(1-t)**(d-j)*eulerian_polynomial(j)
    return sum(factors)
\end{lstlisting}

As an example, the following code verifies that the $h^*$-polynomial of a unimodular simplex is indeed $1$.
\begin{lstlisting}
    sage: p = polytopes.simplex(4)
    sage: e = p.ehrhart_polynomial()
    sage: ehr_to_hstar(e)
\end{lstlisting}

\subsection{Random alcoved polytopes}\label{alg:ehrrand}
We give an example of a function which allows us to create random alcoved polytopes. The polytopes are full-dimensional and inscribed in a cube of dimension $\dim(P)$ and with edge-length $5$
(or smaller). This is our way of making sure that the volume does not get too large for computation. 

The first function organizes a list of numbers into an $\mathcal{H}$-description of an alcoved polytope. As input, it takes the dimension \texttt{dim} of the polytope and a vector \texttt{vec} of length $2 \cdot \binom{\texttt{dim}}{2} + 2 \cdot \texttt{dim}$. The output is a matrix of size $\left(2 \cdot \binom{\texttt{dim}}{2} + 2 \cdot \texttt{dim}\right)\times (\texttt{dim}+1)$ for the $\mathcal{H}$-description.
\begin{lstlisting}
def alcoved_matrix(dim, vec):
    c = binomial(dim,2)
    M_help = Matrix (2*c,dim)
    M = Matrix (2*c+2*dim,dim+1)
    # 
    # all hyperplanes of type x_i-x_j = constant:
    for i in range(c):
    # all sets of 2 indices out of all indices for each choice of (x_i,x_j)
        pairij = Combinations(range(dim),2).list()[i]
        M_help[2*i, pairij[0] ] = 1
        M_help[2*i,pairij[1] ] = -1
        M_help[2*i+1, pairij[0] ] = -1
        M_help[2*i+1, pairij[1] ] = 1
    for i in range(2*c):
        M[i, 0 ] = vec[i]
        for j in range(dim): M[i,j+1] = M_help[i,j]
    # all hyperplanes of type +x_i = constant
    # and -x_i = constant
    for i in range(2*c, 2*c+dim):
        M[i, 0] = vec[i]
        M[i+dim, 0] = vec[i+dim]
        M[i, i+1-2*c]=-1
        M[i+dim, i+1-2*c]=1
    return M
\end{lstlisting}

The following function produces a random input vector for the previous function (satisfying the constraint that the produced alcoved polytope is contained in the appropriate cube). Its input is just the desired dimension \texttt{dim} of the alcoved polytope. The output is a vector of length $2\cdot \binom{\texttt{dim}}{2} + 2\cdot \texttt{dim}$ that is suitable as the input \texttt{vec} for the previous function. The alcoved polytope generated by these two functions always contains the cube $[0,1]^\texttt{dim}$ and is contained in the cube $[-2,3]^\texttt{dim}$. 
\begin{lstlisting}
def random_vector(dim):
    rand=random_matrix(ZZ, 1, 2*binomial(dim,2),  
    x=1,y=6).augment(random_matrix(ZZ, 1, dim, x=1,
    y=4).augment(random_matrix(ZZ, 1, dim, x=0, y=3)))[0]
	return rand
\end{lstlisting}

Alternatively, for large dimension, we might want an alcoved polytope of smaller volume. In this case, we produce a random alcoved polytope in the following way. Again, this function takes the desired dimension \texttt{dim} as input and outputs a vector of length $2\cdot \binom{\texttt{dim}}{2} + 2\cdot \texttt{dim}$ that is suitable as the input \texttt{vec} for the previous function. The alcoved polytope generated by these two functions always contains the cube $[0,1]^\texttt{dim}$ but it is now contained in the cube $[0,3]^\texttt{dim}$. 
\begin{lstlisting}        
 def small_random_vector(dim):
    rand=random_matrix(ZZ, 1, 2*binomial(dim,2),   x=1,y=4).augment(random_matrix(ZZ, 1, dim, x=1,
    y=3).augment(random_matrix(ZZ, 1, dim, x=0, y=2)))[0]
    return rand
\end{lstlisting}

Finally, the following function combines the previous functions to make the polytope. As input, it takes the desired dimension \texttt{dim} of the alcoved polytope and it takes a vector of length $2\cdot \binom{\texttt{dim}}{2} + 2\cdot \texttt{dim}$. It then makes the alcoved polytope by building its $\mathcal{H}$-description using the function shown above.
\begin{lstlisting}
def alcoved_polytope(dim,vec):
    P = Polyhedron(ieqs = alcoved_matrix(dim, vec), backend='normaliz')
    return P
\end{lstlisting}

\subsection{Unimodality}\label{alg:unimodality}
The programs in this section are 
a function that determines whether a list is unimodal and 
a function that tests the $h^*$-vectors of a given number of randomly generated alcoved polytopes
of a given dimension for unimodality.\\
First we convert the  $h^*$-polynomial to the  $h^*$-vector:
\begin{lstlisting}
# h^*-vector from h^*-polynomial 
def Hvec(x): return (ehr_to_hstar(x.ehrhart_polynomial())).coefficients() 
\end{lstlisting}

Then we determine if a list or tuple is unimodal using the following function. The input is a list or tuple \texttt{ls} of numbers. The output is \texttt{True} if the list is unimodal and \texttt{False} otherwise.
\begin{lstlisting}
def unimodal(ls):
    # determines index of the first decrease in the list
    decr= next((i for i in range(1,len(ls)) if ls[i]-ls[i-1]<0),False)
    if (decr == False) or (decr == len(ls)):
       return True
    else: 
    # determines if there is an increase after the decrease
         if any(ls[i]-ls[i-1]>0 for i in range(decr+1,len(ls))):
           return False
         else: return True
\end{lstlisting}

Next we test several $h^*$-vectors of alcoved polytopes for unimodality. The function randomly generates alcoved polytopes specified by the input \texttt{num\_of\_tests} and \texttt{dim}. 
The first input is the number of random instances that the function generates. The second input determines the dimension. The output is the first polytope with non-unimodal $h^*$-vector if such a polytope is found. The output is then a vector that can be turned into a polytope with the \texttt{alcoved\_polytope}-function described above. This function is shown here with the \texttt{random\_vector}-function. It can, of course, also be used with the \texttt{small\_random\_vector}-function instead.
\begin{lstlisting}
def test_for_unimodality(num_of_tests, dim):
    for i in range(num_of_tests):
        rv = random_vector(dim)
        P = alcoved_polytope(dim,rv)
        hvector = Hvec(P)
        if unimodal(hvector) == False:
           print('There is an alcoved polytope with non-unimodal h-star-vector!')
           return rv
    print('All {} tested polytopes have unimodal h-star-vector.'.format(num_of_tests))
\end{lstlisting}
\subsection{Examples}\label{alg:examples}
In Table~\ref{table:hvectors} there are some examples of (unimodal) $h^*$-vectors
calculated with the function \verb|Hvec| coming from alcoved polytopes randomly generated  with the functions described in \Cref{alg:ehrrand}.

There is one example for each dimension between 3 and 16.
For dimension 16 we used the \verb|small_random_vector|
to generate the polytopes. For smaller dimension, we used \verb|random_vector|.

\begin{table}
	\centering
	\begin{tabular}{l|l|l}
		\begin{tabular}[t]{c|l}
			Dim. & $h^*$-vector \\ \hline\hline
			$3$&$1,98,188,22$\\
			\hline
			$4$&$1,104,370,146,3$\\
			\hline
			$5$&$1,356,3216,3965,722,6$\\
			\hline
			$6$&$1,278,3442,7074,2977,189$\\
			\hline
			&$1,522,9978,33062,$\\ 
			$7$&$26037,4584,102$\\
			\hline
			&$1,1962,89574,650410,$\\
			$8$&$1210219,643554,86807,1762$\\ \hline
		    &$1,$\\
			&$5624,$\\ 
			&$409384,$\\ 
			&$4544444,$\\
			$9$&$13413250,$\\
			&$12480018,$\\
			&$3617286,$\\
			&$265589,$\\
			&$2485$\\
			\hline
			&$1,$\\
			&$11361,$\\
			&$1472932,$\\
			&$26848012,$\\
			$10$&$130799314,$\\
			&$213186186,$\\
			&$123024451,$\\
			&$23477417,$\\
			&$1154189,$\\
			&$6839$\\
			\hline
			&$1,$\\
			&$14350,$\\
			&$2256181,$\\
			&$51639116,$\\
			&$327390062,$\\
			$11$&$729687382,$\\
			&$621049446,$\\
			&$198471087,$\\
			&$20964122,$\\
			&$521849,$\\
			&$1070$\\ 
		\end{tabular}
		&
		\begin{tabular}[t]{c|l}
			&\\
			&$1,$\\
			&$19944,$\\
			&$5228717,$\\
			&$186169763,$\\
			&$1812852282,$\\
			$12$&$6346402583,$\\
			&$8965318338,$\\
			&$5257171069,$\\
			&$1223272752,$\\
			&$98031585,$\\
			&$1936661,$\\
			&$3716$\\
			\hline
			&$1,$\\
			&$91329,$\\
			&$44855680,$\\
			&$2470235309,$\\
			&$35303767765,$\\
			&$180399454984,$\\
			$13$&$380503494555,$\\
			&$350270955412,$\\
			&$139818187839,$\\
			&$22527990348,$\\
			&$1233354464,$\\
			&$15460541,$\\
			&$13816$\\
			\hline
			&$1,$\\
			&$92744,$\\
			&$61035357,$\\
			&$4461610156,$\\
			&$84498080329,$\\
			&$579384773749,$\\
			$14$&$1683186967226,$\\
			&$2226507192578,$\\
			&$1364315287619,$\\
			&$375282480730,$\\
			&$42267367262,$\\
			&$1603500992,$\\
			&$13275503,$\\
			&$6792$\\
		\end{tabular}
		&
		\begin{tabular}[t]{c|l}
		    &\\
			&$1,$\\
			&$240485,$\\
			&$278806016,$\\
			&$30713524660,$\\
			&$831462306799,$\\
			&$8037261688378,$\\
			&$33174929171015,$\\
			$15$&$63952385251690,$\\
			&$59735193988116,$\\
			&$26931894294332,$\\
			&$5599149137353,$\\
			&$484130852634,$\\
			&$14170655809,$\\
			&$90022039,$\\
			&$36218$\\
			\hline	
			&$1,$\\
			&$93896,$\\
			&$91469858,$\\
			&$10614760282,$\\
			&$323897816139,$\\
			&$3658410128555,$\\
			&$18169693219004,$\\
			$16$&$43468545132385,$\\
			&$52316695445712,$\\
			&$31944633402938,$\\
			&$9656730519861,$\\
			&$1355225501529,$\\
			&$77874204105,$\\
			&$1436104725,$\\
			&$4957986,$\\
			&$586$\\ 
		\end{tabular}
	\end{tabular}
	\caption{Examples of $h^*$-vectors}
	\label{table:hvectors}
\end{table}

\bigskip

\noindent \textbf{Acknowledgements.}
We would like to thank Christian Haase for asking the question as well as Martina Juhnke-Kubitzke and Johanna Steinmeyer for educational discussions and fruitful hints. We also profitted from the MATH+ Thematic Einstein Semester Algebraic Geometry: Varieties, Polyhedra, Computation.

\newcommand{\SortNoop}[1]{}

\end{document}